\numberwithin{equation}{section}
\setlist[enumerate,1]{label=\rm{(\arabic*)}}
\setlist[enumerate,2]{label=\rm({\alph*})}
\newlist{a-enumerate}{enumerate}{2}
\setlist[a-enumerate,1]{label={\rm(\alph*)}}
\tikzset{>=stealth}
\tikzset{link/.style={column sep=1.8cm,row sep=0.16cm}}
\tikzset{map/.style={row sep=0em, column sep=0em}}
\theoremstyle{plain}
\newtheorem{theorem}{Theorem}[section]
\newtheorem{lemma}[theorem]{Lemma}
\newtheorem{corollary}[theorem]{Corollary}
\newtheorem{proposition}[theorem]{Proposition}
\newtheorem*{theorem*}{Theorem}
\theoremstyle{definition}
\newtheorem{definition}[theorem]{Definition}
\newtheorem*{notation}{Notation}
\theoremstyle{remark}
\newtheorem{remark}[theorem]{Remark}
\newtheorem{example}[theorem]{Example}
\newtheorem*{question*}{Question}
\DeclareMathOperator{\tr}{tr}
\DeclareMathOperator{\Bir}{Bir}
\DeclareMathOperator{\Spec}{Spec}
\DeclareMathOperator{\rk}{rk}
\DeclareMathOperator{\Pic}{Pic}
\DeclareMathOperator{\Aut}{Aut}
\DeclareMathOperator{\Ker}{Ker}
\DeclareMathOperator{\PGL}{PGL}
\DeclareMathOperator{\GL}{GL}
\DeclareMathOperator{\Br}{Br}
\DeclareMathOperator{\Gal}{Gal}
\DeclareMathOperator{\rank}{rank}
\DeclareMathOperator{\Center}{Z}
\DeclareMathOperator{\WW}{W}
\DeclareMathOperator{\Eu}{Eu}
\DeclareMathOperator{\GA}{AGL}
\newcommand{\I}{\ensuremath{\mathrm{I}}}
\newcommand{\II}{\ensuremath{\mathrm{II}}}
\newcommand{\III}{\ensuremath{\mathrm{III}}}
\newcommand{\IV}{\ensuremath{\mathrm{IV}}}
\newcommand{\CC}{\mathbb C}
\newcommand{\QQ}{\mathbb Q}
\newcommand{\FF}{\mathbb F}
\newcommand{\id}{\mathrm{id}}
\newcommand{\kk}{\mathbf k}
\newcommand{\PP}{\mathbb P}
\newcommand{\ZZ}{\mathbb Z}
\newcommand{\pt}{\mathrm{pt}}
\newcommand{\Sym}{\mathrm S}
\newcommand{\Alt}{\mathrm A}
\newcommand{\Dih}{\mathrm D}
\newcommand{\Cyc}{\mathrm C}
\newcommand{\Klein}{\mathrm V}
\DeclareMathOperator{\Image}{Im}
\def\lowsim{\vbox to 0pt{\vss\hbox{$\scriptstyle\sim$}\vskip-2pt}}
\newcommand\iso{\stackrel{\lowsim}{\to}}
\newcommand{\lra}{\longrightarrow}
\newcommand{\supth}[1]{\ensuremath{#1^{\mathrm{th}}}}
\newcommand{\suprd}[1]{\ensuremath{#1^{\mathrm{rd}}}}
\title{On $\boldsymbol{G}$-birational rigidity of del Pezzo surfaces}
\author{Egor Yasinsky}
\address{IMB, Universit\'{e} de Bordeaux, 351 Cours de la Lib\'{e}ration,
	33405 Talence Cedex, France}
\email{egor.yasinsky@u-bordeaux.fr}
\begin{document}

%%%%%%%%%%%%%%%%%%%%%%%%%%%%%%%
% Title page
%%%%%%%%%%%%%%%%%%%%%%%%%%%%%%%

%\removeabove{}
%\removebetween{}
%\removebelow{}

\maketitle

\begin{prelims}

\DisplayAbstractInEnglish

\bigskip

\DisplayKeyWords

\medskip

\DisplayMSCclass

%\bigskip

%\languagesection{Fran\c{c}ais}

%\bigskip

%\DisplayTitleInFrench

%\medskip

%\DisplayAbstractInFrench

\end{prelims}

%%%%%%%%%%%%%%%%%%%%%
% Table of Contents
%%%%%%%%%%%%%%%%%%%%%

\newpage

\setcounter{tocdepth}{1}

\tableofcontents

%%%%%%%%%%%%%%%%%%%%%
% Content begins here
%%%%%%%%%%%%%%%%%%%%%

\section{Birational rigidity}\label{sec:intro}

The present note is motivated by J.~Koll\'{a}r's paper \cite{KollarRigidity} which studies the behaviour of birational rigidity of Fano varieties under extensions of algebraically closed fields. Let $\kk$ be a field. Recall that a \emph{Mori fibre space} is a projective morphism $\pi\colon X\to Y$ of algebraic varieties over $\kk$ such that $X$ is $\QQ$-factorial with terminal singularities, $\rk\Pic(X/Y)=1$ and $-K_X$ is $\pi$-ample. When $Y=\Spec(\kk)$, the variety $X$ is simply a $\QQ$-factorial terminal Fano variety of Picard rank~1. Roughly speaking, it is called \emph{birationally rigid} if $X$ is not birational to the total space of any other Mori fibre space; see Definition~\ref{def: BR} below for the details. In \cite{KollarRigidity} the following question was raised. 

\begin{question*}[Koll\'{a}r]
	Let $X$ be a Fano variety over a field $\kk$ such that $X$ is birationally rigid over the algebraic closure $\overline{\kk}$. Is $X$ birationally rigid over $\kk$?
\end{question*}

From the modern point of view, one can naturally formulate the minimal model program in the \emph{equivariant} setting; see \cite{ProkhorovGMMP} for an overview. Let $X$ be an algebraic variety over a field $\kk$ and $G$ be a group. Following Yu.~Manin \cite{ManinRationalI}, one calls $X$ a \emph{geometric $G$-variety} if $\kk=\overline{\kk}$ and there is an injective homomorphism $G\hookrightarrow\Aut(X)$. Another instance of this concept is an \emph{arithmetic $G$-variety}, for which $\kk\ne\overline{\kk}$ and $G$ is the Galois group of the extension $\overline{\kk}/\kk$ acting on $X\times_{\Spec(\kk)}\Spec(\overline{\kk})$ through the second factor; for simplicity, $\kk$ will be always assumed perfect in this paper. In both cases, we refer to $X$ as a \emph{$G$-variety} if no confusion arises.

\begin{remark}\label{rem: mixed case}
  Furthermore, one can consider the ``mixed'' case, when $\kk$ is not assumed to be algebraically closed and $G$ acts by biregular automorphisms of $X$; \textit{i.e.}~one 
  considers the action of $\Gal(\overline{\kk}/\kk)\times G$. For some results in this setting, see \cite{DolgachevIskovskikhPerfectField,TrepalinConicBundles,TrepalinDPhigh,TrepalinDelPezzo,RobayoZimmermann,YasinskyDelPezzo,CMYZ,AvilovFormsSegre} and Section~\ref{subsec: mixed} below. 
\end{remark}

Now, let us give some precise definitions. We follow \cite[Chapters 1--3]{CheltsovShramovIcosahedron}.

\begin{definition}\label{def: MFS}
	A \emph{$G$-Mori fibre space} is a $G$-equivariant surjective morphism $\pi\colon X\to Y$ of $G$-varieties such that $\pi$ has connected fibres, $X$ has terminal singularities, all $G$-invariant Weil divisors on $X$ are $\QQ$-Cartier divisors, $\dim Y<\dim X$, $\rk\Pic(X)^G-\rk\Pic(Y)^G=1$ and $-K_X$ is $\pi$-ample. If $Y$ is a point, we say that $X$ is a \emph{$G$-Fano variety}.
\end{definition}

\begin{definition}[see \textit{e.g.} {\cite[Definition 3.1.1]{CheltsovShramovIcosahedron}}]\label{def: BR}
	A $G$-Fano variety is called \emph{$G$-birationally rigid} if the following two conditions are satisfied:
	\begin{enumerate}
		\item\label{dBR-1} There is no $G$-birational map $X\dashrightarrow X'$ such that $X'$ is a $G$-Mori fibre space over a positive-dimensional variety.
		\item\label{dBR-2} If there is a $G$-birational map $\varphi\colon X\dashrightarrow X'$ such that $X'$ is a $G$-Fano variety, then $X\simeq X'$ and there is a $G$-birational self-map $\tau\in\Bir(X)$ such that $\varphi\circ\tau$ is a biregular $G$-morphism.
	\end{enumerate} 
	Assume, moreover, that the following holds:
	\begin{enumerate}[resume]
		\item\label{dBR-3} Every $G$-birational self-map $X\dashrightarrow X$ is actually $G$-biregular.  
	\end{enumerate}
	Then $X$ is called \emph{$G$-birationally superrigid}. 
\end{definition}

One can then generalize the initial question as follows. 

\begin{question*}[Cheltsov--Koll\'{a}r]
	Let $G$ be a group and $H\subseteq G$ be a subgroup. Assume that $X$ is an $H$-birationally rigid $H$-Fano variety. Is $X$ then $G$-birationally rigid? 
\end{question*}

Informally speaking, the non-triviality of this question lies \textit{e.g.} in the fact that \emph{a priori} $X$ may admit only $H$-birational maps to Fano varieties $X'$ with $\rk\Pic(X')^H>1$, while enlarging the group to $G$ forces $\Pic(X')^G$ to be of rank 1. Koll\'{a}r's original question addresses arithmetic $G$-varieties. The goal of this note is to answer its {\it geometric} counterpart in dimension~2 in the positive. 

\begin{theorem*}
	Let $\kk$ be an algebraically closed field of characteristic zero and $G$ be a finite group. Let $S$ be a two-dimensional geometric $G$-Fano variety over $\kk$, \textit{i.e.}~a smooth del Pezzo surface on which $G$ acts faithfully by automorphisms, so that $\Pic(S)^G\simeq\ZZ$. Assume that $H\subseteq G$ is a subgroup and $S$ is $H$-birationally rigid. Then $S$ is $G$-birationally rigid. 
\end{theorem*}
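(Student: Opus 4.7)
The plan is to verify the two conditions of $G$-birational rigidity (Definition \ref{def: BR}) for $S$ using the corresponding properties for $H$.

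For condition (1), suppose there is a $G$-birational map $\varphi \colon S \dashrightarrow T$ with $T \to C$ a $G$-MFS over a positive-dimensional base. Then $C \simeq \PP^1$ (as $S$ is rational) and $T$ is a smooth $G$-conic bundle. Forgetting to the $H$-structure, the morphism $T \to C$ is still $H$-equivariant but may fail to have relative $H$-invariant Picard rank $1$. I run the relative $H$-MMP over $C$, contracting $H$-orbits of exceptional components in reducible fibers, to produce an $H$-conic bundle $T' \to C$ with $\rk \Pic(T'/C)^H = 1$. The composition $S \dashrightarrow T \dashrightarrow T'$ is an $H$-birational map to an $H$-MFS over a positive-dimensional base, contradicting condition (1) of $H$-birational rigidity of $S$.

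For condition (2), let $\varphi \colon S \dashrightarrow S'$ be a $G$-birational map with $S'$ a $G$-Fano. The crucial step is to show $\rk \Pic(S')^H = 1$. If not, the $H$-MMP on $S'$ ends either at an $H$-conic bundle (excluded by the preceding paragraph applied to the composition $S \dashrightarrow S' \dashrightarrow \cdots$) or at an $H$-Fano $S''$ reached via a strict contraction. In this latter case, condition (2) of $H$-rigidity applied to the $H$-birational map $S \dashrightarrow S''$ gives $S \simeq S''$ as $H$-varieties, and the equality $K_S^2 = K_{S''}^2 > K_{S'}^2$ contradicts the degree coincidence $K_S^2 = K_{S'}^2$, which holds for any $G$-birational map between smooth $G$-Fano surfaces of $G$-invariant Picard rank $1$ (derivable from a $G$-equivariant common resolution of $\varphi$ together with rank considerations). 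Granted $\rk \Pic(S')^H = 1$, condition (2) of $H$-rigidity produces $\tau_0 \in \Bir^H(S)$ with $\psi_0 := \varphi \circ \tau_0 \colon S \to S'$ biregular and $H$-equivariant.

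Finally I need to upgrade $\psi_0$ to a $G$-equivariant biregular map. Transporting the $G$-action on $S'$ back to $S$ along $\psi_0$ yields a second $G$-action $\sigma$ on $S$ that agrees with the original one on $H$; the discrepancy $g \mapsto c_g := \sigma_g \circ g_S^{-1}$ is a nonabelian $1$-cocycle in $Z^1(G, \Aut(S))$ trivial on $H$, hence inflated from a class in $\Cohom^1(G/H, \Aut(S)^H)$. Vanishing of this class allows me to modify $\psi_0$ on the right by an element of $\Aut(S)^H$ into a $G$-equivariant biregular $\psi \colon S \to S'$, after which $\tau := \varphi^{-1} \circ \psi \in \Bir^G(S)$ satisfies $\varphi \circ \tau = \psi$ $G$-biregular. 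The two technical steps I expect to be most delicate are the degree matching $K_S^2 = K_{S'}^2$ and the cohomological vanishing. For the latter I intend to exploit the strong constraints that $H$-rigidity places on $\Aut(S)^H$, possibly combined with a case-by-case appeal to the classification of $H$-birationally rigid del Pezzo surfaces.
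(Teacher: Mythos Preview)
Your argument for condition (1) is fine and matches the paper's remark that $H$-solidity implies $G$-solidity. The problems are all in condition (2).

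The claim that $K_S^2=K_{S'}^2$ for any $G$-birational map between $G$-Fano surfaces of $G$-invariant Picard rank $1$ is simply false, and this breaks your contradiction argument for $\rk\Pic(S')^H=1$. A $G$-equivariant common resolution $T\to S$, $T\to S'$ only gives $\rk\Pic(T)^G=1+r=1+r'$, so the \emph{number} of $G$-orbits blown up on each side agrees, but their sizes need not: the Sarkisov link of type~II from $\PP^1\times\PP^1$ to $\PP^2$ (blow up one $G$-fixed point, blow down a $G$-orbit of two $(-1)$-curves) is already a counterexample, and the paper's classification of links records many more, e.g.\ degree $5$ to $\PP^2$, degree $5$ to $\PP^1\times\PP^1$, degree $6$ to $\PP^1\times\PP^1$. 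Without this degree equality you get no contradiction from $K_S^2=K_{S''}^2>K_{S'}^2$, so you have no mechanism to force $\rk\Pic(S')^H=1$.

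Your cohomological upgrade is also a genuine gap, not just a delicate step. Inflating the cocycle $c_g=\sigma_g\circ g_S^{-1}$ to $\Cohom^1(G/H,\Aut(S)^H)$ presupposes that $H$ is normal in $G$, which is nowhere assumed; and even granting normality you would still need to prove vanishing of a nonabelian cohomology set, which does not follow from $H$-rigidity alone. Your fallback ``case-by-case appeal to the classification of $H$-birationally rigid del Pezzo surfaces'' is exactly what the paper does from the start.

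By contrast, the paper abandons any uniform argument and proceeds degree by degree via the Sarkisov program: for $K_S^2\leqslant 4$ rigidity is controlled by fixed points and Bertini/Geiser involutions; for degrees $5$ and $6$ one classifies the minimal groups and checks directly that those $H$ for which $S$ is $H$-rigid force $G$-rigidity; the projective plane is handled by Sakovics' criterion; and the bulk of the work is an explicit analysis of all minimal groups on $\PP^1\times\PP^1$ via Goursat's lemma, determining precisely which are birationally rigid and verifying the inclusion property by hand.
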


At the moment, there are no known counter-examples to Cheltsov--Koll\'{a}r's
question, neither in the geometric nor in the arithmetic setting, although it is highly probable such a counter-example exists. On the other hand, the results \cite{CheltsovShramovFiniteCollineationGroups} of I.~Cheltsov and C.~Shramov imply that the answer to the question is positive when $X=\PP_\CC^3$; see Section~\ref{sec: projective space} below. By contrast, a natural generalization of Cheltsov--Koll\'{a}r's question to the ``mixed'' setting of Remark~\ref{rem: mixed case} has a \emph{negative} answer. This will be shown in Section~\ref{subsec: mixed}.

Finally, we note that the analogue of Cheltsov--Koll\'{a}r's question with rigidity replaced by \emph{superrigidity} is easy to answer positively; \textit{i.e.}~one has the following. 

\begin{proposition}\label{prop: superrigidity}
	Let $G$ be a finite group and $H\subseteq G$ be a subgroup. Suppose that $X$ is an $H$-Fano variety. If\, $X$ is $H$-birationally superrigid, then $X$ is $G$-birationally superrigid.
\end{proposition}

\begin{proof}
	Recall, see \cite[Section~3.3]{CheltsovShramovIcosahedron}, that $X$ is $G$-birationally superrigid if and only if for every $G$-invariant mobile linear system $\mathscr{M}_X$ on $X$, the pair $(X,\lambda\mathscr{M}_X)$ is canonical for $\lambda\in\QQ_{>0}$ such that 
	\[
	\lambda\mathscr{M}_X\sim_\QQ -K_X.
	\]
	Since a $G$-invariant linear system $\mathscr{M}_X$ is also $H$-invariant, the result follows.
\end{proof}

\begin{remark}
	The analogue of Proposition~\ref{prop: superrigidity} holds when the notion of superrigidity is replaced with that of {\it solidity}. Recall, see \cite{AbbanOkadaPfaffian}, that a $G$-Fano variety $X$ is called {\it $G$-solid} if $X$ is not $G$-birational to a $G$-Mori fibre space with positive-dimensional base; \textit{i.e.}~only condition~\ref{dBR-1} of Definition~\ref{def: BR} is required; see \cite{CortiSingularitiesOfLinearSystems,CheltsovShramovIcosahedron,CheltsovShramovFiniteCollineationGroups,CheltsovSarikyan}. Recently, $G$-solid rational surfaces over the field of complex numbers were classified in \cite{Pinardin}.
\end{remark}

\begin{notation} We denote by $\Sym_n$ the symmetric group on $n$ letters and by $\Alt_n$ its alternating subgroup. Further, $\Dih_n$ denotes the dihedral group of order $2n$ with presentation
\[
\Dih_n=\left\langle r,s\ |\ r^n=s^2=(sr)^2=\id\right\rangle, 
\]
and $\Cyc_n$ denotes the cyclic group of order $n$. We denote by $\Klein_4$ the Klein four-group, isomorphic to $\Cyc_2\times\Cyc_2$. Finally, $A_\bullet B$ denotes an extension of $B$ by $A$, not necessarily split.
\end{notation}

\subsection*{Acknowledgements}
I would like to thank Ivan Cheltsov for bringing this problem to  my attention. I am very grateful to Andrey Trepalin for useful remarks and for pointing out some gaps in the first version of this text. Finally, I thank the anonymous referees for their helpful comments.

\section{Sarkisov links on del Pezzo surfaces}

\subsection{Geometric $\boldsymbol{G}$-surfaces}

In what follows, all surfaces are assumed smooth and projective and defined over an algebraically closed field $\kk$ of characteristic zero. Given a finite group $G$, a (geometric) {\it $G$-surface} is a triple $(S,G,\iota)$, where $S$ is a surface over $\kk$ and $\iota\colon G\hookrightarrow \Aut(S)$ is a faithful $G$-action. A $G$-{\it morphism} of $G$-surfaces $(S_1,G,\iota_1)\to (S_2,G,\iota_2)$ is a morphism $f\colon S_1\to S_2$ such that $f\circ\iota_1(G)=\iota_2(G)\circ f$. Similarly, one defines $G$-rational maps and $G$-birational maps. In what follows, we consider only rational $G$-surfaces equipped with the structure of a $G$-Mori fibre space, \textit{i.e.}~$G$-del Pezzo surfaces and $G$-conic bundles, whose $G$-invariant Picard ranks are 1 and 2, respectively.

\subsection{Sarkisov program}

The proof of the main theorem will be based on the explicit geometry of del Pezzo surfaces (for which we refer to \cite[Chapter 8]{DolgachevClassicalAG}) and, most importantly, the \emph{Sarkisov program} in dimension 2. Any $G$-birational map between two $G$-surfaces can be decomposed into a sequence of birational $G$-morphisms and their inverses. A birational $G$-morphism $S\to T$ is a blow-up of a union of $G$-orbits on $T$. In this article, we often refer to a $G$-orbit of size $d\geqslant 1$ as a \emph{$G$-point of degree $d$}. In particular, a $G$-point of degree 1 is simply a fixed point of $G$.

A $G$-birational map $\varphi$ between $G$-Mori fibre spaces $\pi\colon S\to B$ and $\pi'\colon S'\to B'$ is a diagram 
\[
\xymatrix{
	S\ar@{-->}[r]^{\varphi}\ar[d]_{\pi} & S'\ar[d]^{\pi'}\\
	B & B'
}
\]
which in general does not commute with the fibrations. Recall that in dimension 2, a rational $G$-Mori fibre space $\pi\colon S\to B$ is either a $G$-del Pezzo surface (if $B$ is a point) or a $G$-conic bundle (if $B=\PP^1$). According to the equivariant version of the Sarkisov program, every $G$-birational map $\varphi\colon S\dashrightarrow S'$ of $G$-Mori fibre spaces is factorized into a composition of isomorphisms of $G$-Mori fibre spaces and {\it elementary Sarkisov $G$-links} of four types, depicted below.
\[
\text{Type $\I$}
\xymatrix{
	&& T\ar@{->}[dll]_{\eta}\ar@{->}[d]^{\pi}\\
	S\ar[d]&& \PP^1\ar[dll]\\
	\pt &&
}\quad \text{Type $\II$}
\xymatrix{
	&T\ar@{->}[dl]_{\eta}\ar@{->}[dr]^{\eta'}&\\
	S\ar@{-->}[rr]^{\chi}\ar[dr]&& S'\ar[dl]\\
	& {B} &}
\]
\[
\text{Type $\III$}
\xymatrix{
	T\ar@{->}[drr]^{\eta}\ar@{->}[d]_{\pi} && \\
	\PP^1\ar[drr] && S\ar[d]\\
	&& \pt
}\quad \text{Type $\IV$}
\xymatrix{
	&T\ar@{->}[dl]_{\pi}\ar@{->}[dr]^{\pi'}&\\
	\PP^1\ar[dr]&& \PP^1\ar[dl]\\
	& \pt &}
\]
For type $\I$, $S$ is a $G$-del Pezzo surface and the Sarkisov link $\eta$ is the blow-up of a $G$-point on~$S$, giving a $G$-conic bundle $\pi\colon T\to\PP^1$; a link of type $\III$ is simply the inverse of $\I$. For type~$\II$, the birational morphisms $\eta$ and $\eta'$ are the blow-ups of $G$-points on~$S$ and $S'$, respectively. The induced Sarkisov link $\chi$ is either an elementary transformation of $G$-conic bundles (when $B\simeq\PP^1$) or a $G$-birational map between $G$-del Pezzo surfaces (when $B=\pt$). Finally, a link of type $\IV$ is the choice of a conic bundle structure on a $G$-conic bundle $T$ which has exactly two such structures; note that in general such a link is not represented by a biregular automorphism of~$T$, which exchanges $\pi$ and $\pi'$.

\begin{remark}\label{rem: rank r fibration}
	Recently, the Sarkisov program has been reformulated (and then successfully used to prove many structural results about Cremona groups) in terms of so-called \emph{rank~$r$ fibrations}; see \cite{LamyZimmermann,BLZ}. For example, in the arithmetic case, one defines a rank $r$ fibration as a surface $S$ with a surjective morphism $\pi\colon S\to B$ with connected fibres, where
	$B$ is a point or a smooth curve, with relative Picard number equal to~$r$ and $\pi$-ample anticanonical divisor~$-K_S$. Of course, in rank 1 fibrations, we recognize the usual (arithmetic) $G$-del Pezzo surfaces and $G$-conic bundles. The key observation, based on the so-called \emph{2-ray game}, is that rank 2 fibrations are in a one-to-one correspondence with Sarkisov links (and rank 3 fibrations correspond to the elementary relations between the links). As usual, there is a geometric counterpart of this theory; see \textit{e.g.} \cite{SchneiderZimmermann,Floris,FlorisZikas}. 
\end{remark}

Sarkisov links between surfaces were classified by V.\,A.~Iskovskikh in \cite[Theorem 2.6]{Isk1996} in the arithmetic case (see also a recent exposition \cite{LamySchneider} by S.~Lamy and J.~Schneider) and restated in \cite[Section 7]{DolgachevIskovskikh} in the geometric case. The following claim will be used systematically throughout the paper and is an immediate consequence of the Sarkisov program. 

\begin{proposition}
	A del Pezzo surface $S$ is $G$-birationally rigid if and only if for every Sarkisov $G$-link $S\dashrightarrow S'$, the surfaces $S$ and $S'$ are $G$-isomorphic.
\end{proposition}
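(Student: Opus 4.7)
The plan is to derive both implications directly from the decomposition theorem of the equivariant Sarkisov program, which states that every $G$-birational map between two $G$-Mori fiber spaces factors as a finite composition of elementary Sarkisov $G$-links. With that input, the proposition reduces to unpacking Definition \ref{def: BR}.

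For the forward direction, I would begin with a Sarkisov $G$-link $\varphi\colon S\dashrightarrow S'$ and inspect its target. Since $S'$ is a $G$-Mori fiber space, condition~(1) of Definition \ref{def: BR} prohibits a positive-dimensional base, so $S'$ must be a $G$-del Pezzo surface and, in particular, a $G$-Fano. Condition~(2) then immediately yields $S\simeq S'$ as $G$-surfaces.

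For the converse, assume every Sarkisov $G$-link $S\dashrightarrow S'$ satisfies $S'\simeq S$ as $G$-surfaces, and take an arbitrary $G$-birational map $\varphi\colon S\dashrightarrow X$ to a $G$-Mori fiber space $X$. I would apply the Sarkisov program to write $\varphi=\tau_n\circ\cdots\circ\tau_1$, where $\tau_i\colon S_{i-1}\dashrightarrow S_i$ is an elementary Sarkisov $G$-link, $S_0=S$ and $S_n=X$, and then prove by induction on $i$ that $S_i\simeq S$ as $G$-surfaces. The inductive step is carried out by transporting the link $\tau_i$ along an isomorphism $\psi\colon S_{i-1}\iso S$ furnished by the inductive hypothesis: $\tau_i\circ\psi^{-1}$ is a Sarkisov $G$-link from $S$, and the assumption forces $S_i\simeq S$. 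At $i=n$ this gives $X\simeq S$, so $X$ cannot be a conic bundle and condition~(1) is verified; moreover, if $\psi\colon S\iso X$ is the resulting isomorphism, then $\tau:=\varphi^{-1}\circ\psi$ belongs to $\Bir(S)$ and $\varphi\circ\tau=\psi$ is biregular, establishing condition~(2).

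The only delicate point is the transport of a Sarkisov $G$-link along the isomorphism $\psi\colon S_{i-1}\iso S$. This is legitimate because each intermediate $S_i$ inductively has $G$-invariant Picard rank~$1$, so it carries a unique $G$-Mori fiber space structure (with base a point), and $\psi$ is automatically an isomorphism of $G$-Mori fiber spaces. Beyond this bookkeeping there is no serious obstacle: the content of the proposition is essentially a reformulation of Definition \ref{def: BR} using the Sarkisov decomposition.
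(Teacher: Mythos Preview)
Your proof is correct and matches the paper's approach: the paper states the proposition without proof, calling it ``an immediate consequence of the Sarkisov program,'' and your argument is precisely the unpacking of that consequence via Definition~\ref{def: BR} and induction along a Sarkisov factorization. The transport-of-links step you flag is indeed the only point requiring care, and your justification is sound.
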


\section{Del Pezzo surfaces of degree less than 6 and the projective plane}

In what follows, $S$ denotes a smooth del Pezzo surface over an algebraically closed field~$\kk$ of characteristic zero. Recall that $K_S^2\in\{1,2,\ldots,9\}$. Let $G\subseteq\Aut(S)$ be a finite group and $H\subsetneq G$ be a subgroup (we always stick to this notation in what follows). Assume that $\Pic(S)^H\simeq\ZZ$, so in particular $\Pic(S)^G\simeq\ZZ$. Note that the condition $\Pic(S)^H\simeq\ZZ$ immediately excludes two cases: first, when $K_S^2=7$, and second, when $K_S^2=8$ and $S$ is a blow-up of $\PP^2$ in one point. Indeed, in both cases, there exists an $H$-invariant $(-1)$-curve on $S$ which can be $H$-equivariantly contracted, so $\rk\Pic(S)^H>1$. 

\begin{proposition}\label{prop: Segre-Manin}
	Assume that $K_S^2\in\{1,2,3\}$ and $\Pic(S)^G\simeq\ZZ$. Then $S$ is $G$-birationally rigid.
\end{proposition}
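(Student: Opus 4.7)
The plan is to invoke the preceding Proposition and verify that every Sarkisov $G$-link $\chi\colon S \dashrightarrow S'$ issued from $S$ has target $G$-equivariantly isomorphic to $S$. Resolve such a link as $S \xleftarrow{\sigma} \widetilde{S} \xrightarrow{\tau} S'$, where $\sigma$ is the blow-up of a $G$-invariant $0$-dimensional subscheme $\mathfrak{p}\subset S$, and let $\mathscr{M}$ be the mobile $G$-invariant linear system on $S$ associated with $\chi$, normalized so that $\mathscr{M}\sim_{\QQ} -\mu K_S$ for some $\mu\in\QQ_{>0}$.

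The main tool will be the Noether--Fano inequality for surface Sarkisov links: unless $\chi$ is an isomorphism, the log pair $(S,\mu^{-1}\mathscr{M})$ has a non-canonical center at $\mathfrak{p}$, so $m:=\mathrm{mult}_{\mathfrak{p}}\mathscr{M}>\mu$. Computing intersection numbers on $\widetilde{S}$, the strict transform $\widetilde{\mathscr{M}}$ satisfies
\[
\widetilde{\mathscr{M}}^2 \;=\; \mu^2 K_S^2 - m^2\deg\mathfrak{p} \;\geq\; 0,
\]
which combined with $m>\mu$ forces $\deg\mathfrak{p}<K_S^2\le 3$.

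For $K_S^2=1$ this inequality has no solution, so no nontrivial link exists and $S$ is in fact $G$-birationally superrigid. For $K_S^2=2$, the only admissible center is a single $G$-fixed closed point $p\in S$, and the link $\chi$ is identified with the Geiser involution associated with the double cover $|-K_S|\colon S \to \PP^2$, which is moreover a biregular $G$-automorphism of $S$; hence $S'\simeq_G S$. For $K_S^2=3$, $\deg\mathfrak{p}\in\{1,2\}$; a case-by-case analysis using the configuration of $(-1)$-curves on the weak del Pezzo $\widetilde{S}$ identifies $\chi$ with a classical birational involution of the cubic $S$ (projection from a $G$-fixed point, or the involution induced by a $G$-invariant pair of coplanar points), so again $S'\simeq_G S$.

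The principal obstacle will be the cubic case, where several orbit configurations of $\mathfrak{p}$ arise and one must verify for each that the second extremal contraction $\tau$ really lands back on $S$ rather than on a different cubic surface, and that it intertwines the $G$-action. This amounts to executing the $G$-equivariant version of the classical Segre--Manin theorem, which can be carried out along the lines of Iskovskikh's catalogue of two-dimensional Sarkisov links \cite{Isk1996} combined with the explicit geometry of cubic surfaces as presented in \cite{DolgachevIskovskikh}.
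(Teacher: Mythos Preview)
Your overall strategy---bounding $\deg\mathfrak{p}$ via Noether--Fano and then identifying each admissible link as a birational involution returning to $S$---is exactly the paper's approach, which simply quotes Iskovskikh's catalogue of type~II links. Two points in the execution need correction, however.

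First, for $K_S^2=2$ you misidentify the link. The Geiser involution of $S$ is the biregular deck transformation of $|-K_S|\colon S\to\PP^2$ and involves no blow-up; it is not a Sarkisov link centred at a point. The link centred at a $G$-fixed point $p$ is the \emph{birational Bertini involution}: one blows up $p$ to obtain a degree-$1$ del Pezzo $T$ and applies the biregular Bertini involution $\sigma\in\Aut(T)$, so that $\eta'=\eta\circ\sigma$ up to an automorphism of $S$. This map is genuinely only birational on $S$. (Likewise, for $K_S^2=3$ the degree-$1$ and degree-$2$ centres give birational Geiser and Bertini involutions via the intermediate degree-$2$ and degree-$1$ surfaces; your phrase ``pair of coplanar points'' does not describe a standard construction.)

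Second, and more importantly, you never explain why $S'\simeq_G S$ rather than merely $S'\simeq S$ as abstract surfaces. The paper's proof hinges on the fact that the Bertini (resp.\ Geiser) involution $\sigma$ is \emph{central} in $\Aut(T)$, hence commutes with the lifted $G$-action on $T$; this is precisely what makes $\eta'\circ\sigma\circ\eta^{-1}$ a $G$-equivariant birational self-map and forces the target of $\chi$ to be $G$-isomorphic to $S$. Without invoking centrality, the $G$-equivariant conclusion is not justified, and this is the one genuinely new ingredient beyond the classical (non-equivariant) Segre--Manin argument.
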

\begin{proof}
	This is essentially the content of the so-called \emph{Segre--Manin theorem} (which follows from the classification of Sarkisov links nowadays). If $S\dashrightarrow S'$ is a $G$-birational map to another $G$-Mori fibre space $S'$, then it decomposes into Sarkisov $G$-links of type II and isomorphisms, and for any such link $\chi\colon S\dashrightarrow S'$, there exists a~commutative diagram
	\begin{equation}\label{eq: Sarkisov II}
	\xymatrix{
		&T\ar@{->}[dl]_{\eta}\ar@{->}[dr]^{\eta'}&\\
		S\ar@{-->}[rr]^{\chi}\ar[dr]&& S'\ar[dl]\\
		& \pt\rlap{,} &}
	\end{equation}
	where $\eta,\eta'$ are birational morphisms and $S',T$ are del Pezzo surfaces as well; see \cite[Propositions 7.12 and~7.13]{DolgachevIskovskikh}. This immediately implies that $S$ is even $G$-superrigid when $K_S^2=1$.
	If $K_S^2\in\{2,3\}$, then up to automorphisms of $S$, any such link is a~birational Bertini or Geiser involution; \textit{i.e.}~there exist a~biregular involution $\sigma\in\Aut(T)$ and an automorphism $\delta\in\Aut(S)$ such that
	$\chi=\eta'\circ\sigma\circ\eta^{-1}\circ\delta$.
	Thus, in particular, we have $S'\simeq S$. Since $\sigma$ centralizes $G$, we conclude that $S'$ is $G$-isomorphic to $S$.
\end{proof}

\begin{remark}
	In \cite{MauriDasDores}, M.~Mauri and L.~das Dores classify completely those del Pezzo surfaces of degree 2 and 3 which are $G$-birationally superrigid. 
\end{remark}

\begin{proposition}
	Let $K_S^2=4$. Then $S$ is $G$-birationally rigid if and only if there are no $G$-fixed points on $S$. In particular, if\, $S$ is $H$-birationally rigid, then it is $G$-birationally rigid as well. 
\end{proposition}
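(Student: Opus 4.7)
The plan is to apply the Sarkisov criterion stated just above: $S$ is $G$-birationally rigid if and only if every Sarkisov $G$-link $S\dashrightarrow S'$ has $S'\simeq_G S$. Thus the task is to classify Sarkisov $G$-links out of $S$ and deduce the biconditional ``$S$ is $G$-birationally rigid $\iff S^G=\emptyset$''. Every such link begins with the $G$-equivariant blow up of a $G$-invariant $0$-dimensional cluster $\mathfrak p\subset S$. Invoking the Iskovskikh--Dolgachev--Iskovskikh classification in dimension two \cite{Isk1996,DolgachevIskovskikh} together with the condition $\Pic(S)^G=\ZZ$, I would first verify that every Sarkisov link centred at a cluster of degree $\ge 2$ closes up $G$-equivariantly to $S$, by arguments analogous to the Bertini/Geiser analysis in Proposition~\ref{prop: Segre-Manin}. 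Hence only the blow up of a single $G$-fixed point $p$ can give rise to a non-trivial Sarkisov link.

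Given this reduction, the forward direction is immediate: if $S^G=\emptyset$, no non-trivial Sarkisov link exists and $S$ is $G$-birationally rigid. Conversely, let $p\in S^G$ and let $\pi:\tilde S\to S$ be the blow up at $p$. Then $\tilde S$ is a weak del Pezzo surface of degree $3$ with $\rk\Pic(\tilde S)^G=2$. Running $G$-equivariant MMP on $\tilde S$ produces a second extremal $G$-contraction distinct from $\pi$, yielding a Sarkisov $G$-link $S\dashrightarrow S'$; by the classification the target $S'$ is either a $G$-conic bundle, violating condition~(1) of Definition~\ref{def: BR}, or a $G$-Fano not $G$-isomorphic to $S$, violating~(2). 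A concrete realisation is the projection from $p$, which identifies $\tilde S$ with a cubic surface in $\PP^3$, making the second contraction visible via the resulting configuration of lines.

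The ``in particular'' assertion is then immediate from the biconditional: since $H\subseteq G$ one has $S^G\subseteq S^H$, so $H$-birational rigidity forces $S^H=\emptyset$, hence $S^G=\emptyset$, and applying the biconditional back to $G$ yields $G$-birational rigidity.

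The main obstacle is verifying that the link produced from a $G$-fixed point $p$ is genuinely non-trivial, i.e.\ that $S'\not\simeq_G S$ or $S'$ is a conic bundle. While the existence of the second extremal ray on $\tilde S$ is automatic from the rank count, identifying the target up to $G$-isomorphism requires tracking the induced $G$-action along the contraction; this is handled by an explicit orbit analysis of the exceptional configuration of $\tilde S$, supported by the cubic-surface model in $\PP^3$. A secondary subtlety is the classification step for cluster centres of degree $\ge 2$, which proceeds by case analysis on the possible orbit types compatible with $\Pic(S)^G=\ZZ$.
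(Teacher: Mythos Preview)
Your approach is essentially the same as the paper's: both invoke the Iskovskikh--Dolgachev classification of Sarkisov links from a degree-$4$ del Pezzo to see that links centred at points of degree $2$ or $3$ are Geiser or Bertini involutions, while a $G$-fixed point initiates a link that breaks rigidity. The paper is slightly more direct in the converse direction---it simply cites that the link centred at a degree-$1$ point is of type~I and lands on a cubic surface carrying a $G$-conic-bundle structure, so your proposed ``explicit orbit analysis of the exceptional configuration'' on $\tilde S$ is unnecessary.
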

\begin{proof}
	By \cite[Propositions 7.12 and 7.13]{DolgachevIskovskikh}, every Sarkisov $G$-link $\chi$ starting from $S$ is either of type I or  of type II. In the former case, $\chi$ is centred at a $G$-fixed point on $S$. So, if such a point exists, then its blow-up is a smooth cubic surface equipped with a structure of $G$-conic bundle, so $S$ is not $G$-birationally rigid. Suppose there are no $G$-fixed points on~$S$. Then by \textit{loc.~cit.}, any Sarkisov $G$-link of type II starting from $S$ is either a birational Bertini involution (centred at a point of degree 3) or a birational Geiser involution (centred at a point of degree 2). In both cases, it leads to a $G$-isomorphic surface $S'\simeq S$, as in the proof of Proposition~\ref{prop: Segre-Manin}. Thus, $S$ is $G$-birationally rigid.   
\end{proof}

Before going to the next case, let us recall the following useful statement. 

\begin{lemma}[\textit{cf.} {\cite[Lemma 2.4]{BialynickiBirula}}]\label{lemma: fixed point}
	Let $X$ be an irreducible algebraic variety and $G\subset\Aut(X)$ be a finite group. If\, $G$ fixes a point $p\in X$, then there is a faithful linear representation $G\hookrightarrow\GL(T_pX)$. 
\end{lemma}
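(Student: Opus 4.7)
The plan is to show that the natural tangent action $\rho\colon G \to \GL(T_pX)$ — coming from the $G$-action on $\mathcal{O}_{X,p}$ preserving $\mathfrak{m}_p$, hence on $V^* := \mathfrak{m}_p/\mathfrak{m}_p^2$, and dually on $V := T_pX$ — is itself faithful. So one picks $g \in G$ with trivial action on $V$ and aims to conclude $g = \id_X$, which forces $g = 1$ in $G$ since $G \hookrightarrow \Aut(X)$ by hypothesis.

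The key step is a Cartan-style linearization of the formal neighborhood of $p$. Since $\charact \kk = 0$, the order $|G|$ is invertible in $\kk$, so I would average any $\kk$-linear section $s\colon V^* \to \mathfrak{m}_p$ of the quotient map $\mathfrak{m}_p \twoheadrightarrow V^*$ to obtain a $G$-equivariant section
\[
\bar s := \tfrac{1}{|G|}\sum_{h \in G} h \circ s \circ h^{-1} \colon V^* \longrightarrow \mathfrak{m}_p.
\]
The universal property of the symmetric algebra turns $\bar s$ into a $G$-equivariant $\kk$-algebra homomorphism $\Sym(V^*) \to \mathcal{O}_{X,p}$, which after $\mathfrak{m}_p$-adic completion yields a $G$-equivariant local map $\varphi\colon \kk[[V^*]] \to \widehat{\mathcal{O}}_{X,p}$. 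This $\varphi$ is surjective because it is the identity on cotangent spaces and both rings are complete Noetherian local with residue field $\kk$.

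Now, if $g$ acts trivially on $V$, hence on $V^*$, then $g$ acts trivially on all of $\kk[[V^*]]$, and $G$-equivariance plus surjectivity of $\varphi$ force $g$ to act trivially on $\widehat{\mathcal{O}}_{X,p}$, and hence on $\mathcal{O}_{X,p}$ (which embeds in its completion). Thus $g$ is the identity on some affine open neighborhood of $p$; because $X$ is irreducible, $\Fix(g)$ is a closed set containing a nonempty open subset, and therefore $\Fix(g) = X$, finishing the argument. The main obstacle is the construction of the $G$-equivariant surjection $\varphi$, which is exactly where the hypothesis $\charact \kk = 0$ (and hence $|G|$ invertible in $\kk$) is used via Maschke-style averaging; in positive characteristic dividing $|G|$ the conclusion of the lemma can genuinely fail.
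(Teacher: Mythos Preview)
The paper does not supply its own proof of this lemma; it merely records the statement with a citation to Bia\l{}ynicki-Birula. So there is nothing in the paper to compare against, and the relevant question is simply whether your argument stands on its own.

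It does. The Cartan-style linearization you outline is the standard proof in characteristic zero, and each step is sound: the averaged section $\bar s$ is still a section of $\mathfrak{m}_p\twoheadrightarrow V^*$ (because the quotient map is $G$-equivariant) and is $G$-equivariant by the usual reindexing; the induced map $\kk[[V^*]]\to\widehat{\mathcal{O}}_{X,p}$ is surjective by the complete local Nakayama lemma; and surjectivity plus equivariance transports triviality of $g$ from source to target. Two small remarks. First, the passage from ``$g$ acts trivially on $\mathcal{O}_{X,p}$'' to ``$g=\id_X$'' can be done more cleanly than via $\Fix(g)$: since $X$ is irreducible (hence integral), $\mathcal{O}_{X,p}$ has fraction field $\kk(X)$, so $g$ induces the identity on $\kk(X)$, and an automorphism of a variety inducing the identity on the function field is the identity. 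This bypasses the need to produce a $g$-stable affine open neighborhood of $p$. Second, your explicit invocation of $\charact\kk=0$ is appropriate and matches the standing hypothesis of the paper; the lemma as stated can indeed fail when $\charact\kk$ divides $|G|$.
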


\begin{remark}\label{rem: fixed point cyclic}
	Note that a cyclic group always has a fixed point on a rational variety over an algebraically closed field of characteristic zero. This follows from the holomorphic Lefschetz fixed-point formula. 
\end{remark}

We now proceed with del Pezzo surfaces of degree 5.

\begin{lemma}[\textit{cf.}  {\cite[Theorem 6.4]{DolgachevIskovskikh}}]\label{lemma: dP5 minimal groups}
	Let $S$ be a del Pezzo surface of degree 5 and $G\subset\Aut(S)$ be a group such that $\Pic(S)^G\simeq\ZZ$. Then $G$ is isomorphic to one of the following five groups:
	\[
	\Sym_5,\quad\Alt_5,\quad \GA_1(\FF_5),\quad \Dih_{5},\quad \Cyc_5.
	\]
\end{lemma}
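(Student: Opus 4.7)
The plan is to reduce the statement to the classification of transitive subgroups of $\Sym_5$, via the natural action of $\Aut(S)$ on $\Pic(S)$. First, I would recall that a smooth del Pezzo surface of degree~$5$ is unique up to isomorphism: it is the blow-up of $\PP^2$ at any four points in general position, and its automorphism group is classically known to be $\Aut(S)\simeq\Sym_5$ (see e.g.\ \cite[Chapter~8]{DolgachevClassicalAG}). In particular, $G$ embeds in $\Sym_5$. The $\Sym_5$-action on $\Pic(S)\otimes\QQ$ splits equivariantly as $\QQ K_S\oplus(K_S^\perp\otimes\QQ)$, with $K_S^\perp$ a root lattice of type $A_4$ on which the induced $\Sym_5$-action is the standard $4$-dimensional representation, i.e.\ the permutation representation of $\Sym_5$ on a $5$-element set $\Omega$ with the trivial line removed. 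A concrete realization of $\Omega$ is the set of five pencils of conics on $S$, equivalently the five ways of blowing $S$ down to $\PP^2$.

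Next, I would observe that $K_S\in\Pic(S)^G$ always holds, so $\rk\Pic(S)^G\ge 1$, and the condition $\Pic(S)^G\simeq\ZZ$ is equivalent to $(K_S^\perp)^G=0$. Since $K_S^\perp\otimes\QQ$ is the standard representation of $\Sym_5$, the dimension of $G$-invariants there equals the number of $G$-orbits on $\Omega$ minus one. Hence $\Pic(S)^G\simeq\ZZ$ if and only if $G$ acts transitively on $\Omega$.

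It then remains to classify the transitive subgroups of $\Sym_5$. A Sylow-style argument shows that $|G|$ must be divisible by $5$; combined with the absence of elements of orders $10$ or $15$ in $\Sym_5$ and with the fact that the normalizer of a Sylow $5$-subgroup is a Frobenius group of order $20$, one obtains exactly five such subgroups up to conjugacy, namely
\[
\Cyc_5,\quad \Dih_5,\quad \GA_1(\FF_5),\quad \Alt_5,\quad \Sym_5,
\]
which is the claimed list. The one point to verify carefully is the identification in the first paragraph---that the image of the representation $\Aut(S)\to\Ort(\Pic(S))$ really is the full Weyl group $W(A_4)$ and that the induced module structure on $K_S^\perp$ is the standard $\Sym_5$-representation. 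Once this is granted, the remainder of the argument is purely combinatorial.
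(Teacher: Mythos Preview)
Your argument is correct and is precisely the standard one: identify $\Aut(S)\simeq W(A_4)\simeq\Sym_5$, observe that $K_S^\perp\otimes\QQ$ is the standard representation so that $\rk\Pic(S)^G=1$ is equivalent to transitivity on the associated $5$-element set, and then list the transitive subgroups of $\Sym_5$. The paper does not give its own proof of this lemma but simply quotes \cite[Theorem~6.4]{DolgachevIskovskikh}, whose proof follows exactly the same route you outline; so there is nothing to compare beyond noting that your sketch reproduces the cited argument.
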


Here, $\GA_1(\FF_5)$ denotes the general affine group of degree 1 over $\FF_5$, defined by the presentation $\langle a,b\ |\ a^5=b^4=\id, bab^{-1}=a^3 \rangle $; it has the structure of a semidirect product $\Cyc_5\rtimes\Cyc_4$ and is sometimes called the Frobenius group of order 20.

\begin{proposition}[\textit{cf.} {\cite{WolterEquivariantBirationalDP5} and \cite[Example 6.3]{CheltsovLCThresholds}}]
	Let $K_S^2=5$. If\, $S$ is $H$-birationally rigid, then it is $G$-birationally rigid. 
\end{proposition}
\begin{proof}
	We use Lemma~\ref{lemma: dP5 minimal groups}. By \cite[Propositions 7.12 and~7.13]{DolgachevIskovskikh}, every Sarkisov $G$-link starting from $S$ is of type II; \textit{i.e.}~it is a diagram (\ref{eq: Sarkisov II}) where $\eta$ blows up a $G$-point of degree $d$, and one of the following holds:
	\begin{enumerate}
		\item\label{1} $S\simeq S'$, $d=4$, $\chi$ is a birational Bertini involution;
		\item\label{2} $S\simeq S'$, $d=3$, $\chi$ is a birational Geiser involution;
		\item\label{3} $S'\simeq\PP^1\times\PP^1$, $d=2$;
		\item\label{4} $S'\simeq\PP^2$, $d=1$.
	\end{enumerate}
	Recall that $\Alt_5$ (and hence $\Sym_5$) has no faithful representations of degree 2. Therefore, neither $\Alt_5$ nor $\Sym_5$ can have an orbit of size 1 or 2 on $S$ by Lemma~\ref{lemma: fixed point}. Further, $\Sym_5$ and $\Alt_5$ have no subgroups of index 3 or 4; hence there are no birational Bertini or Geiser involutions for these groups. We conclude that $S$ is $G$-birationally superrigid for $G\in\{\Alt_5,\Sym_5\}$. 
	
	So, it remains to verify the statement for the following pairs $(G,H)$:
	\[
	(\Cyc_5\rtimes\Cyc_4,\Dih_5),\quad (\Cyc_5\rtimes\Cyc_4,\Cyc_5),\quad (\Dih_5,\Cyc_5).
	\]
	Note that $S$ is never $H$-birationally rigid for $H\in\{\Cyc_5,\Dih_5\}$. Indeed, it is easy to show (using a holomorphic Lefschetz fixed-point formula, as was mentioned in Remark~\ref{rem: fixed point cyclic}) that $\Cyc_5$ has exactly {\it two} fixed points on $S$ and these points do not lie on $(-1)$-curves; see \textit{e.g.} \cite[Lemma 4.16]{YasinskyOdd}. Therefore, there is a Sarkisov link~\ref{4} from above, which leads to $\PP^2$. Furthermore, these two fixed points form an orbit under the action of the dihedral group $\Dih_5$, containing $\Cyc_5$. So, there is a link~\ref{3} leading to $\PP^1\times\PP^1$.  
\end{proof}

Let us treat the del Pezzo surface of degree 9, \textit{i.e.}~the projective plane $S=\PP^2$. We stick to the following classical (although perhaps outdated) terminology. 

\begin{definition}[\textit{cf.} {\cite{Blichfeldt}}]\label{def: Blichfeldt}
	We call a subgroup $\iota\colon G\hookrightarrow\GL_n(\kk)$ {\it intransitive} if the representation $\iota$ is reducible, and {\it transitive} otherwise. Further, a transitive group $G$ is called {\it imprimitive} if there is a decomposition $\kk^n=\bigoplus_{i=1}^mV_i$ into a direct sum of subspaces and $G$ transitively acts on the set $\{V_i\}$. A transitive group $G$ is called {\it primitive} if there is no such decomposition. Finally, we say that $G\subset\PGL_n(\kk)$ is (in)transitive or (im)primitive if its preimage in $\GL_n(\kk)$ is such a group.
\end{definition}

The following is due to D.~Sakovics. 

\begin{theorem}[\textit{cf.} {\cite[Theorem 1.3]{SakovicRigidity}}]\label{thm: Sakovics}
	The projective plane $\PP^2$ is $G$-birationally rigid if and only if\, $G$ is transitive and $G$ is not isomorphic to $\Alt_4$ or $\Sym_4$.
\end{theorem}

\begin{corollary}
	If\, $\PP^2$ is $H$-birationally rigid, then it is $G$-birationally rigid as well.
\end{corollary}
\begin{proof}
	Indeed, $G$ is transitive since $H$ is. Assume that $G\simeq\Alt_4$ or $G\simeq\Sym_4$. If $H\ne\Alt_4$, then $H$ must be one of the following groups: $\Cyc_2$, $\Cyc_3$, $\Cyc_4$, $\Cyc_2\times\Cyc_2$, $\Sym_3$, $\Dih_4$. But the irreducible representations of these groups are of degree 1 or 2; hence they fix a point on $\PP^2$ and are not transitive, so we have a contradiction (in fact, there is an $H$-equivariant blow-up $\FF_1\to\PP^2$). 
\end{proof}

\section{Del Pezzo surfaces of degree 6}\label{sec: dP6}

Let $S$ be a del Pezzo surface of degree $K_S^2=6$. Recall that $S$ is a blow-up $\pi \colon S\to \PP^2$ of three non-collinear points $p_1,p_2,p_3$, which we may assume to be $[1:0:0]$, $[0:1:0]$ and $[0:0:1]$, respectively. The set of $(-1)$-curves on $S$ consists of six curves: the exceptional divisors of blow-ups $e_i={\pi}^{-1}(p_i)$ and the strict transforms of the lines $d_{ij}$ passing through $p_i$ and $p_j$. In the anticanonical embedding $S\hookrightarrow\PP^6$, these exceptional curves form a ``regular hexagon'' $\Sigma$. This yields a homomomorphism to the symmetry group of this hexagon
\[
\psi\colon \Aut(S)\lra \Aut(\Sigma)\simeq\Dih_{6}=\left\langle r,s\ |\ r^6=s^2=1,\ srs=r^{-1}\right\rangle, 
\]
where $r$ is a rotation by $\pi/3$ and $s$ is a reflection, shown on Figure~\ref{pic:dP6}. The surface $S$ can be given as
\begin{equation}\label{eq: del Pezzo of degree 6}
\left\{([x_0:x_1:x_2],[y_0:y_1:y_2])\in\PP^2\times\PP^2:\ x_0y_0=x_1y_1=x_2y_2 \right\}. 
\end{equation}
The projection to the first factor $\PP^2$ is the blow-down of three lines $\{x_1=x_2=0\}$, $\{x_0=x_2=0\}$ and $\{x_0=x_1=0\}$ onto $p_1$, $p_2$ and $p_3$, respectively, while the projection to the second factor is the blow-down of $\{y_1=y_2=0\}$, $\{y_0=y_2=0\}$ and $\{y_0=y_1=0\}$.

The kernel of $\psi$ is the maximal torus $T$ of $\PGL_3(\kk)$, isomorphic to $(\kk^*)^3/\kk^*\simeq(\kk^*)^2$ and acting on $S$ by
\begin{equation}\label{eq: del Pezzo 6 torus action}
(\lambda_0,\lambda_1,\lambda_2)\cdot ([x_0:x_1:x_2],[y_0:y_1:y_2])=([\lambda_0 x_0:\lambda_1x_1:\lambda_2x_2],[\lambda_0^{-1}y_0:\lambda_1^{-1}y_1:\lambda_2^{-1}y_2])
\end{equation}
The corresponding element of $T$ will be denoted by $[(\lambda_1,\lambda_2,\lambda_3)]$. The action of $T$ on $S\setminus\Sigma$ is faithful and transitive. The automorphism group of $\Aut(S)$ fits into the short exact sequence 
\[
1\lra(\kk^*)^2\lra\Aut(S)\overset{\psi}{\lra}\Dih_6\lra 1 
\]
with $\psi(\Aut(S))\simeq\Dih_6\simeq\Sym_3\times\Cyc_2$. We denote by
\begin{equation}\label{eq: del Pezzo 6 Cremona}
\iota\colon([x_0:x_1:x_2],[y_0:y_1:y_2])\longmapsto ([y_0:y_1:y_2],[x_0:x_1:x_2])
\end{equation}
the lift of the standard Cremona involution, whose image under $\psi$ generates $\langle r^3\rangle\simeq\Cyc_2$, the centre of $\psi(\Aut(S))$. Further, the symmetric group $\Sym_3$ naturally acts on the indices of the coordinates $([x_0:x_1:x_2],$ $[y_0:y_1:y_2])$. In what follows, we will denote by 
\begin{equation}\label{eq: dP6 order 3 rotation}
	\theta\colon ([x_0:x_1:x_2],[y_0:y_1:y_2])\longmapsto ([x_1:x_2:x_0],[y_1:y_2:y_0])
\end{equation}
the automorphism which is mapped to $r^2$, the rotation of order 3 of the hexagon $\Sigma$.  
\begin{figure}[h]
	\centering
	\includegraphics[width=.3\linewidth]{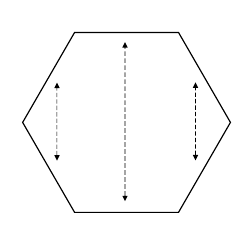}
	\caption{Action of $s$ on $\Sigma$}
	\label{pic:dP6}
\end{figure}

Note that 
\begin{equation}\label{eq: dP6 order 6 auto}
	\varrho=\iota\circ\theta\colon ([x_0:x_1:x_2],[y_0:y_1:y_2])\longmapsto ([y_1:y_2:y_0],[x_1:x_2:x_0]) 
\end{equation}
is an automorphism of order 6 such that $\psi(\varrho)$ generates $\langle r\rangle$. Finally, the automorphism
\begin{equation}\label{eq: dP6 sigma}
\sigma\colon ([x_0:x_1:x_2],[y_0:y_1:y_2])\longmapsto ([y_0:y_2:y_1],[x_0:x_2:x_1])
\end{equation}
is mapped onto the reflection of $\Sigma$. The automorphisms $\sigma$ and $\varrho$ generate a subgroup of $\Aut(S)$ which is mapped isomorphically onto $\Dih_6$ by $\psi$. In what follows, we sometimes call these actions $\theta$, $\iota$, $\varrho$ and $\sigma$ ``standard''.

\begin{lemma}\label{lem: dP6 fixed points}
	Let $S$ be a del Pezzo surface of degree 6 and $G\subset\Aut(S)$ be a finite group such that $\Pic(S)^G\simeq\ZZ$. If $G$ fixes a point on $S$, then $G\cap T=\id$.
\end{lemma}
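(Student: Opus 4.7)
The plan is to first pin down where a $G$-fixed point can live on $S$, and then exploit the free action of the torus $T$ away from the hexagon $\Sigma$. The starting observation is that, because $T$ acts trivially on $\Pic(S)$ and acts transitively and faithfully on the two-dimensional open subset $S\setminus\Sigma$, the stabilizer of any point of $S\setminus\Sigma$ in $T$ is trivial: transitivity gives a surjection $T\to S\setminus\Sigma$, $t\mapsto t\cdot p$, whose fibers have the same dimension (so stabilizers are finite), and since $T$ is abelian all stabilizers coincide, so faithfulness forces them to be trivial. In particular, $T$ acts freely on $S\setminus\Sigma$, and the fixed locus $S^T$ is contained in $\Sigma$ (one checks from the explicit coordinate formulas that $S^T$ consists precisely of the six vertices of the hexagon, though I will only need the inclusion $S^T\subseteq\Sigma$).

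Next I would argue that the hypothesis $\Pic(S)^G\simeq\ZZ$ forbids $G$ from fixing any point of $\Sigma$. Suppose for contradiction that a $G$-fixed point $p$ lies on $\Sigma$. If $p$ lies in the interior of some $(-1)$-curve $E$, then $E$ is the unique $(-1)$-curve through $p$, so every $g\in G$ must satisfy $g(E)=E$; thus $[E]\in\Pic(S)^G$, and since $E^2=-1$ while $K_S^2=6$, the class $[E]$ is not a $\QQ$-multiple of $[-K_S]$, forcing $\rk\Pic(S)^G\geq 2$, a contradiction. If $p$ is a vertex of $\Sigma$, it lies on exactly two $(-1)$-curves $E_1, E_2$, and the pair $\{E_1,E_2\}$ is $G$-invariant. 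Either at least one $E_i$ is $G$-invariant (and the same contradiction as before applies), or $G$ swaps $E_1$ and $E_2$; in the latter case the class $[E_1+E_2]\in\Pic(S)^G$ satisfies
\[
(E_1+E_2)^2 \;=\; E_1^2 + 2\,E_1\!\cdot\!E_2 + E_2^2 \;=\; -1 + 2 - 1 \;=\; 0,
\]
since the two curves meet transversally in the single point $p$. As $K_S^2=6\neq 0$, the classes $[E_1+E_2]$ and $[-K_S]$ are $\QQ$-linearly independent, again contradicting $\rk\Pic(S)^G=1$.

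Combining the two steps, any $G$-fixed point $p$ must lie in $S\setminus\Sigma$; but then
\[
G\cap T \;\subseteq\; \mathrm{Stab}_T(p) \;=\; \{1\},
\]
which is exactly the claim.

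The only real obstacle is the vertex case, where one must rule out the possibility that $G$ transposes the two $(-1)$-curves through $p$. This is handled by the short intersection-theoretic computation above: the self-intersection $(E_1+E_2)^2=0$ combined with $K_S^2=6$ immediately produces a second $G$-invariant class linearly independent of $-K_S$, which contradicts minimality. Everything else is either the explicit description of the $T$-action on $S$ already recorded in the excerpt, or a direct dimension/faithfulness argument for the torus.
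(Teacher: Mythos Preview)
Your proof is correct and follows essentially the same strategy as the paper: use that $T$ acts freely on $S\setminus\Sigma$, and use the minimality hypothesis $\Pic(S)^G\simeq\ZZ$ to rule out a $G$-fixed point on $\Sigma$. The only noteworthy difference is in the vertex case. You compute $(E_1+E_2)^2=0\neq K_S^2$ to see that $[E_1+E_2]$ is not proportional to $-K_S$, giving a second $G$-invariant class directly. The paper instead observes that if $G$ swaps the two sides $\ell_1,\ell_2$ through the vertex, then it must also swap the two \emph{next} sides $\ell_1',\ell_2'$ of the hexagon, which are disjoint; this produces a $G$-invariant pair of non-intersecting $(-1)$-curves that can be contracted, again contradicting $\rk\Pic(S)^G=1$. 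Both arguments are short and valid; yours is a pure intersection-number check, the paper's is a bit more geometric (it actually exhibits a $G$-equivariant contraction).
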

\begin{proof}
	Assume that $G\cap T\ne\id$. Note that $T$ can be identified with a subgroup of $\PGL_3(\kk)$ which fixes three points $p_1$, $p_2$ and $p_3$. In particular, an element $t\in T$ fixing a point on $S\setminus\Sigma$ is necessarily trivial (of course, one can also deduce that from the explicit action of $T$ given above). Therefore, a fixed point $p\in S$ of $G$ lies on $\Sigma$. Note that it must be the intersection of two sides of $\Sigma$, as otherwise we have a $G$-invariant $(-1)$-curve, contradicting the minimality condition $\Pic(S)^G\simeq\ZZ$. Similarly, if $p=\ell_1\cap\ell_2$, where $\ell_1$, $\ell_2$ are some sides of $\Sigma$, then either $G$ preserves both $\ell_i$ and hence $\rk\Pic(S)^G>1$, or $G$ switches $\ell_1$ and $\ell_2$. Denoting by $\ell_1'$ and $\ell_2'$ the other two sides which intersect $\ell_1$ and $\ell_2$, respectively, we easily see that $\ell_1'$ and $\ell_2'$ form a $G$-orbit of non-intersecting $(-1)$-curves, so $\rk\Pic(S)^G>1$.
\end{proof}

The following elementary group-theoretic fact will be used several times below. We state it only for the reader's convenience.

\begin{lemma}\label{lem: dihedral group subgroups}
	The subgroups of the dihedral group $\Dih_n=\langle r,s\ |\ r^n=s^2=(sr)^2=\id\rangle$ are the following:
	\begin{description}
		\item[\rm Cyclic] $\langle r^d\rangle\simeq\Cyc_{n/d}$ and $\langle r^ks\rangle$, where $d$ divides $n$ and $0\leqslant k\leqslant n-1$; 
		\item[\rm Dihedral] $\langle r^d,r^ks\rangle\simeq\Dih_{n/d}$, where $d<n$ divides $n$ and $0\leqslant k\leqslant d-1$. 
	\end{description}
Moreover, all cyclic subgroups $\langle r^d\rangle$ are normal, one has $\Dih_n/\langle r^d\rangle\simeq\Dih_{d}$, and these are all normal subgroups when $n$ is odd. When $n$ is even, there are two more normal dihedral subgroups of index 2, namely $\langle r^2,s\rangle$ and $\langle r^2,rs\rangle$.
\end{lemma}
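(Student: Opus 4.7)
The plan is to exploit the normal cyclic subgroup $N=\langle r\rangle\simeq\Cyc_n$ of index $2$ in $\Dih_n$. Given a subgroup $H\subseteq\Dih_n$, I would first look at $H_0:=H\cap N$, which is a subgroup of the cyclic group $N$. By the well-known classification of subgroups of cyclic groups, $H_0=\langle r^d\rangle$ for a uniquely determined divisor $d$ of $n$. If $H\subseteq N$, this immediately gives the first family of cyclic subgroups $\langle r^d\rangle\simeq\Cyc_{n/d}$.

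Next, I would treat the case when $H$ contains at least one reflection, i.e., an element of the form $r^k s$. Using the dihedral relation $s r s^{-1} = r^{-1}$, I would show that if $r^k s$ and $r^\ell s$ both lie in $H$, then their product $r^{k-\ell}$ lies in $H_0=\langle r^d\rangle$, forcing $d\mid k-\ell$. Conversely, for every $j$, the element $r^{k+jd}s=r^{jd}\cdot r^k s$ belongs to $H$. Hence the reflections in $H$ are precisely $\{r^{k+jd}s : 0\leqslant j<n/d\}$. In particular $H=\langle r^d, r^k s\rangle$ has order $2n/d$; choosing $k$ modulo $d$, one sees that $H$ is generated by an element of order $n/d$ and a reflection satisfying the standard dihedral relation, so $H\simeq\Dih_{n/d}$. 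A small check disposes of the degenerate case $d=n$, which produces the order‑$2$ cyclic subgroups $\langle r^k s\rangle$ of the first list. This exhausts all possibilities.

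For the normality statement, I would observe that $\langle r^d\rangle$ is the kernel of the natural surjection $\Dih_n\twoheadrightarrow\Dih_d$ sending $r\mapsto \bar r$ with $\bar r^{\,d}=1$ and fixing $s$; this simultaneously proves normality and identifies the quotient. To find remaining normal dihedral subgroups $H=\langle r^d, r^k s\rangle$, I would conjugate by $r$: using $s r^{-1} = rs$, one computes $r(r^k s)r^{-1}=r^{k+2}s$, which must again lie in $H$. Combined with the description of reflections in $H$ above, this forces $d\mid 2$. If $d=1$ then $H=\Dih_n$; if $d=2$, which is only possible when $n$ is even, the parity of $k$ distinguishes the two conjugacy classes of reflections, giving exactly the two subgroups $\langle r^2,s\rangle$ and $\langle r^2,rs\rangle$ of index $2$. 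When $n$ is odd no such $d=2$ exists, so the normal subgroups are exactly the $\langle r^d\rangle$ together with $\Dih_n$ itself.

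There is no real obstacle here; the only mild care required is bookkeeping of the reflections modulo $d$ so as not to overcount generating sets, and keeping track of the parity of $n$ in the last step. Since the statement is purely group‑theoretic and standard, I would present the proof tersely, essentially as the three steps above.
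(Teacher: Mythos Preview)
Your argument is correct and is the standard proof of this elementary fact. Note, however, that the paper does not actually prove this lemma: it is introduced as ``the following elementary group-theoretic fact'' and stated ``only for the reader's convenience,'' with no proof given. So there is nothing to compare against; your write-up simply supplies the omitted (routine) verification. One tiny point of bookkeeping: when checking normality of $\langle r^d, r^k s\rangle$ you conjugate by $r$ to get $d\mid 2$; strictly speaking one should also conjugate by $s$, but since $s(r^k s)s^{-1}=r^{-k}s$ and $-k\equiv k\pmod 2$ when $d=2$, this adds no new constraint, so the conclusion stands.
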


\begin{lemma}\label{lem: dP6 minimal groups}
	Let $S$ be a del Pezzo surface of degree 6. If\, $\Pic(S)^G\simeq\ZZ$, then $G$ is of the form
	\[
	N_\bullet\langle r\rangle\simeq N_\bullet \Cyc_6,\quad N_\bullet\langle r^2,s\rangle\simeq  N_\bullet\Sym_3\quad\text{or}\quad N_\bullet\langle r,s\rangle\simeq N_\bullet\Dih_6,
	\]
	where $N\simeq\Cyc_n\times\Cyc_m$ is a subgroup of\, $\Ker\psi\simeq (\kk^*)^2$.
        In particular, if\, $G$ fixes a point on $S$, then it is isomorphic to one of the following subgroups of\, $\Image\psi=\Aut(\Sigma)$: $\Cyc_6$, $\Sym_3$ or $\Dih_6$. 
\end{lemma}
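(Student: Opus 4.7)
My strategy is to first translate the hypothesis $\Pic(S)^G \simeq \ZZ$ into a condition on the image $\rho(G) \subset \Dih_6$, then enumerate the admissible subgroups of $\Dih_6$, and finally use the restriction of the short exact sequence $1\to T\to \Aut(S)\to \Dih_6\to 1$ to $G$ to describe the structure of $G$ itself.

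The reduction step rests on the fact that $T\simeq(\kk^*)^2$ is a connected algebraic group, and so acts trivially on the discrete lattice $\Pic(S)$. Consequently $\Pic(S)^G=\Pic(S)^{\rho(G)}$, and the minimality condition depends only on the finite group $\rho(G)\subseteq\Dih_6$.

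Next, I enumerate subgroups $H\subseteq\Dih_6$ via Lemma \ref{lem: dihedral group subgroups} and compute the invariant lattice $\Pic(S)^H$ for each. Using the $\ZZ$-basis $\ell,e_1,e_2,e_3$ of $\Pic(S)$, the actions of $r$ and $s$ can be read off from the permutation of the six $(-1)$-curves arranged cyclically around the hexagon $\Sigma$: $r$ cyclically shifts $e_1,d_{13},e_3,d_{23},e_2,d_{12}$ by one step, while $s$ acts as the indicated reflection. Direct matrix computation shows that $\Pic(S)^H$ has rank $1$ exactly when $H\in\{\langle r\rangle,\,\langle r^2,s\rangle,\,\langle r,s\rangle\}$, corresponding to $\Cyc_6$, $\Sym_3$ and $\Dih_6$. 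All other subgroups yield rank $\geqslant 2$: for instance $\langle r^2\rangle$ fixes both $\ell$ and $e_1+e_2+e_3$; $\langle r^3\rangle$ (the Cremona involution) and each reflection subgroup $\langle r^ks\rangle$ give rank-$3$ fixed lattices; the three $\Klein_4$-subgroups give rank-$2$ fixed lattices; and crucially, since $n=6$ is even, there are two non-conjugate copies of $\Sym_3$ inside $\Dih_6$, and only $\langle r^2,s\rangle$ admits a rank-$1$ invariant sublattice, while the other $\Sym_3$ fixes the sublattice $\ZZ\ell\oplus\ZZ(e_1+e_2+e_3)$.

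Thirdly, set $N:=G\cap T$. It is a finite subgroup of the torus $(\kk^*)^2$, hence isomorphic to $\Cyc_n\times\Cyc_m$ for some positive integers $n,m$. The short exact sequence $1\to T\to\Aut(S)\to\Dih_6\to 1$ then restricts to
\[
1\longrightarrow N\longrightarrow G\longrightarrow\rho(G)\longrightarrow 1,
\]
which may fail to split, giving precisely the $N_\bullet\rho(G)$ of the statement. For the last claim, if $G$ fixes a point on $S$, then Lemma \ref{lem: dP6 fixed points} forces $G\cap T=\{\id\}$, so $N$ is trivial and $G$ is isomorphic to its image in $\Dih_6$, namely one of $\Cyc_6$, $\Sym_3$, or $\Dih_6$.

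The main obstacle is the case analysis in the second step, in particular distinguishing the two non-conjugate $\Sym_3$'s in $\Dih_6$ — the content of the lemma really is that exactly one of them (together with $\Cyc_6$ and the full $\Dih_6$) produces a rank-$1$ invariant Picard lattice. Once this combinatorial fact is in hand, the identification of $G$ as an extension of $\rho(G)$ by $N=G\cap T$ is automatic from the short exact sequence defining $\Aut(S)$.
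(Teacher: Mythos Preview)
Your proof is correct and follows essentially the same approach as the paper: both reduce the question to the image $\rho(G)\subset\Dih_6$, enumerate the subgroups of $\Dih_6$, and identify the three subgroups that yield a rank-one invariant lattice, then deduce the extension structure and invoke Lemma~\ref{lem: dP6 fixed points} for the fixed-point claim. The only stylistic difference is that the paper disposes of each inadmissible subgroup by exhibiting a $G$-orbit of pairwise disjoint $(-1)$-curves on $\Sigma$ (whose sum is then an invariant class independent of $K_S$), whereas you compute the invariant sublattice directly in the basis $\ell,e_1,e_2,e_3$; these are equivalent verifications of the same fact, and your explicit justification that $T$ acts trivially on $\Pic(S)$ (hence $\Pic(S)^G=\Pic(S)^{\rho(G)}$) is a point the paper leaves implicit.
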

\begin{proof}
	Let $G\subset\Aut(S)$ be a finite group. Assume that $\psi(G)$ is cyclic. In all the cases described in Lemma~\ref{lem: dihedral group subgroups} except $\psi(G)=\langle r\rangle$, we clearly have a $G$-orbit of skew sides of the hexagon, which correspond to $(-1)$-curves. Hence $\rk\Pic(S)^G>1$.
	
	Let $\psi(G)$ be dihedral, \textit{i.e.}~$\langle r,s\rangle$, $\langle r^2,s\rangle$, $\langle r^2,rs\rangle$, $\langle r^3,s\rangle$, $\langle r^3,rs\rangle$ or $\langle r^3,r^2s\rangle$. In the last three cases, one can always find a $G$-orbit of two disjoint $(-1)$-curves (a pair of opposite sides of the hexagon). Similarly, if $\psi(G)=\langle r^2,rs\rangle$, then we have a $G$-orbit consisting of three pairwise non-intersecting $(-1)$-curves. In the first two cases, one has $\Pic(S)^G\simeq\ZZ$. Finally, the claim about fixed points follows from Lemma~\ref{lem: dP6 fixed points}.
\end{proof}

\begin{remark}\label{rem: dP6 non-minimal S3}
	Note that, although $\Dih_6=\langle r,s\rangle$ contains two groups isomorphic to $\Sym_3$, only one of them gives $G$-invariant Picard number 1, namely $\langle r^2,s\rangle$, which we denote by $\Sym_3^{\rm min}$. The group $\langle r^2,rs\rangle$ will be denoted by $\Sym_3^{\rm nmin}$. Note that this is the quotient of $\Dih_6$ by its centre $\Center(\Dih_6)=\langle r^3\rangle \simeq\Cyc_2$.
\end{remark}

By \cite[Propositions 7.12 and~7.13]{DolgachevIskovskikh}, every Sarkisov link starting from $S$ is of type II and is represented by the diagram (\ref{eq: Sarkisov II}), where $\eta$ blows up a point of degree $d$ and one of the following holds:
\begin{enumerate}
	\item\label{b-1} $S\simeq S'$, $d=5$, $\chi$ is a birational Bertini involution;
	\item\label{b-2} $S\simeq S'$, $d=4$, $\chi$ is a birational Geiser involution;
	\item\label{b-3} $d=3$, $K_{S'}^2=6$;
	\item\label{b-4} $d=2$, $K_{S'}^2=6$;
	\item\label{b-5} $d=1$, $S'\simeq\PP^1\times\PP^1$.
\end{enumerate}

Let us emphasize that in cases~\ref{b-3} and~\ref{b-4}, the surfaces $S'$ \emph{does not} have to be $G$-isomorphic\footnote{Therefore, the user should be careful when using the statement of \cite[Proposition 7.13]{DolgachevIskovskikh}, whose notation is a bit misleading, in our opinion. For example, in the case of del Pezzo surfaces of degree 6 and links at points of degree 3 and 2, the authors write $S'\simeq S$, which might create the impression that this is an isomorphism of $G$-surfaces; compare with the case of del Pezzo surfaces of degree 8 and points of degree 4, where it is not written that $S'\simeq S$.} to $S$, as the following example shows. I am grateful to Andrey Trepalin for pointing this out.

\begin{example}\label{ex: dP6}
	Let $\omega$ be a primitive $\suprd{3}$ root of unity, and consider the finite subgroup $G\subset\Bir(\PP_\kk^2)$ generated by the following three elements:
	\[
	\alpha\colon [x:y:z]\longmapsto [x:\omega y:\omega^2 z],\quad \beta\colon [x:y:z]\longmapsto [y:z:x],\quad \gamma\colon [x:y:z]\longmapsto [yz:xz:xy].	
	\]
	One has 
	\[
	G=\left\langle\alpha,\beta,\gamma\ |\ \alpha^3=\beta^3=\gamma^2=\id,\ \alpha\beta=\beta\alpha,\ \beta\gamma=\gamma\beta,\ \gamma\alpha\gamma=\alpha^{-1} \right\rangle \simeq(\Cyc_3\times\Cyc_3)\rtimes\Cyc_2,
	\]
	where the copies of $\Cyc_3$ are generated by $\alpha$ and $\beta$, and $\Cyc_2$ is generated by the Cremona involution $\gamma$ and acts on $\Cyc_3\times\Cyc_3$ by coordinate exchange (\textit{i.e.}~we have the wreath product $G\simeq\Cyc_3\wr\Cyc_2$). The group $G$ is regularized on a del Pezzo surface $S$ of degree $6$ given by Equation (\ref{eq: del Pezzo of degree 6}), which we identify with the blow-up of $\PP^2$ in $p_1=[1:0:0]$, $p_2=[0:1:0]$ and $p_3=[0:0:1]$. The homomorphism $\psi\colon\Aut(S)\to\Dih_6$ induces a short exact sequence
	\begin{equation}\label{eq: toric 1}
	1\lra\langle\alpha\rangle\lra G\lra\langle\beta,\gamma\rangle\lra 1,
	\end{equation}
	where $\beta$ acts by permutation of coordinates in each triple $x_0,x_1,x_2$ and $y_0,y_1,y_2$, the involution~$\gamma$ acts as in (\ref{eq: del Pezzo 6 Cremona}) and $\alpha$ acts as in (\ref{eq: del Pezzo 6 torus action}). In particular, $\psi(G)\simeq\Cyc_6$. Consider the lift of the three points $p_4=[1:1:1]$, $p_5=[1:\omega:\omega^2]$ and $p_6=[1:\omega^2:\omega]$ on $S$. They form a $G$-orbit in general position on $S$. A Sarkisov link (\ref{eq: Sarkisov II}) centred at these points leads to a del Pezzo surface $S'$ of degree 6. Let us denote by $L_{ij}$ the strict transforms on the cubic surface $T$ of the lines on $\PP^2$ passing through $p_i$ and $p_j$. Similarly, $Q_{ijklr}$ will denote the strict transform of the conic passing through $p_i,p_j,p_k,p_l,p_r$. Then the morphism $\eta'$ blows down $Q_{12456}$, $Q_{23456}$ and $Q_{13456}$. The hexagon of $(-1)$-curves on $S'$ consists of the $\eta'$-images of $L_{45}$, $L_{46}$, $L_{56}$, $Q_{12346}$, $Q_{12345}$ and $Q_{12356}$. Hence the homomorphism $\psi'\colon\Aut(S')\to\Dih_6$ induces a short exact sequence
	\begin{equation}\label{eq: toric 2}
	1\lra\langle\beta\rangle\lra G\lra\langle\alpha,\gamma\rangle\lra 1.
	\end{equation}
	In particular, $\psi'(G)\simeq\Sym_3$. But this clearly implies that $S$ and $S'$ cannot be $G$-isomorphic.
\end{example}

So, one has to pay  special attention to Sarkisov links centred at points of degrees 3 and~2. Proposition~\ref{prop: dP6 links} below shows that the extra condition of being $H$-isomorphic eliminates the phenomena described in Example~\ref{ex: dP6}. To prove it, we will need some technical lemmas.

\begin{lemma}\label{lem: dP6 standard form of a group}
	Let $S$ be a del Pezzo surface of degree 6 and $\tau$ be the toric automorphism \eqref{eq: del Pezzo 6 torus action} with $\lambda_0=1$, $\lambda_1=\omega$, $\lambda_2=\omega^2$, where $\omega$ is a primitive $\suprd{3}$ root of unity. Assume that a group $\Gamma\subset\Aut(S)$ fits into the short exact sequence 
	\begin{equation}\label{eq: dP6 Gamma exact sequence}
	1\overset{}{\longrightarrow}\Gamma' \overset{}{\longrightarrow}\Gamma\overset{\psi}{\longrightarrow}\Gamma''\overset{}{\longrightarrow} 1,
	\end{equation}
	where $\Gamma'=\langle \tau\rangle$ or $\Gamma'=\id$. Then one has the following:
	\begin{enumerate}
		\item If\, $\Gamma''\simeq\Dih_6$, then $\Gamma$ is conjugate in $\Aut(S)$ to the subgroup $\Gamma_0\subset\Aut(S)$ generated by $\tau$, $\varrho$ and $\sigma$ if\, $\Gamma'=\langle \tau\rangle$, and by $\varrho$ and $\sigma$ if\, $\Gamma'=\id$. 
		\item If\, $\Gamma''\simeq\Cyc_6$, then $\Gamma$ is conjugate in $\Aut(S)$ to the subgroup $\Gamma_0\subset\Aut(S)$ generated by $\tau$ and $\varrho$ if\, $\Gamma'=\langle \tau\rangle$, and by $\varrho$ if\, $\Gamma'=\id$. 
	\end{enumerate}
\end{lemma}
\begin{proof}
	We first prove the claim for $\Gamma'=\langle \tau\rangle$ and  $\Gamma''\simeq\Dih_6$. Let $\overline{\varrho}\in\Gamma$ and $\overline{\sigma}\in\Gamma$ be such that $\psi(\overline{\varrho})$ and $\psi(\overline{\sigma})$ generate $\Gamma''$. We may thus assume that $\psi(\overline{\varrho})=\psi(\varrho)$ and $\psi(\overline{\sigma})=\psi(\sigma)$. Then $\Gamma$ is generated by $\tau$, $\overline{\varrho}$ and $\overline{\sigma}$. The map $\overline{\varrho}$ is given by
	\begin{equation}\label{eq: dP6 order 6}
		[(1,a,b)]\circ\varrho=\overline{\varrho}\colon ([x_0:x_1:x_2],[y_0:y_1:y_2])\longmapsto ([y_1:ay_2:by_0],[x_1:a^{-1}x_2:b^{-1}x_0] )
	\end{equation}
	for some $a,b\in\kk^*$. The map 
	\[
	\beta\colon([x_0:x_1:x_2],[y_0:y_1:y_2])\longmapsto ([x_0:ba^{-1}x_1:a^{-1}x_2],[y_0:ab^{-1}y_1,ay_2])
	\]
	commutes with $\tau$ and satisfies $\beta\circ\overline{\varrho}\circ\beta^{-1}=\varrho$. Let $\widetilde{\varrho}=\beta\circ\overline{\varrho}\circ\beta^{-1}=\varrho$ and $\widetilde{\sigma}=\beta\circ\overline{\sigma}\circ\beta^{-1}$. Then $\tau$, $\widetilde{\varrho}$, $\widetilde{\sigma}$ generate the group $\beta\circ\Gamma\circ\beta^{-1}$. Since $\beta\in\ker\psi=T$, we have $\psi(\widetilde{\sigma})=\psi(\sigma)$ and thus
	\begin{equation}\label{eq: dP6 order 2}
		\widetilde{\sigma}=\mu\circ\sigma \colon ([x_0:x_1:x_2],[y_0:y_1:y_2])\longmapsto ([y_0:cy_2:dy_1],[x_0:c^{-1}x_2:d^{-1}x_1])
	\end{equation}
	for some $\mu=[(1,c,d)]\in T$. Therefore, 
	\begin{equation}\label{eq: dP6 order 2 bis}
	\widetilde{\sigma}^2\colon ([x_0:x_1:x_2],[y_0:y_1:y_2])\longmapsto ([x_0:cd^{-1}x_1:c^{-1}dx_2],[y_0:c^{-1}dy_1:cd^{-1}y_2])
	\end{equation}
	is a power of $[(1,\omega,\omega^2)]$; hence $c=d\omega^i$ for  $i\in\{0,1,2\}$. Since $(\sigma\circ\varrho)^2=\id$, we have that 
	\begin{equation}\label{eq: dP6 order 2 bis bis}
	(\widetilde{\sigma}\circ\widetilde\varrho)^2\colon ([x_0:x_1:x_2],[y_0:y_1:y_2])\longmapsto ([cx_0:cx_1:d^2x_2],[c^{-1}y_0:c^{-1}y_1:d^{-2}y_2])
	\end{equation}
	is a power of $\tau=[(1,\omega,\omega^2)]$; hence $c=d^2$. Since $c=d\omega^i$, we conclude that $\mu=\tau^k$ for some $k\in\{0,1,2\}$, \textit{i.e.}~$\widetilde{\sigma}=\sigma\circ\tau^k$. Therefore, $\beta\circ\Gamma\circ\beta^{-1}=\langle\tau,\varrho,\sigma\circ\tau^k\rangle=\Gamma_0$, as  claimed. If $\Gamma''\simeq\Cyc_6$, then it is enough to conjugate the generator $\overline{\varrho}$ to $\varrho$.  
	
	If $\Gamma'=\id$ and $\Gamma''\simeq\Dih_6$ or $\Gamma''\simeq\Cyc_6$, we again conjugate the generator $\overline{\varrho}$ to $\varrho$. Since $\widetilde{\sigma}$ and $\widetilde{\sigma}\circ\widetilde{\varrho}$ are just involutions, (\ref{eq: dP6 order 2 bis}) gives $c=d$, while (\ref{eq: dP6 order 2 bis bis}) implies $c=d^2$. Hence $c=d=1$ and we are done. We refer to \cite[Propositions 5.6, 5.7 and~5.8]{Pinardin} for similar proofs in these cases.
\end{proof}	

Let us fix some notation. Let $\chi\colon S_1\dashrightarrow S_2$ be a Sarkisov $G$-link between del Pezzo surfaces of degree 6 and $H\subset G$ be a subgroup. We denote by $G_1=\iota_1(G)$ and $H_1=\iota_1(H)$ the embeddings of $G$ and $H$ into $\Aut(S_1)$, and by $G_2=\iota_2(G)$ and $H_2=\iota_2(H)$ the embeddings of $G$ and $H$ into $\Aut(S_2)$ induced by the map~$\chi$. For each $i\in\{1,2\}$, we denote by $\psi_i\colon\Aut(S_i)\to\Aut(\Sigma_i)\simeq\Dih_6$ the homomorphism described above with $T_i=\ker\psi_i$; further, set $G_{i,T}=G_i\cap T_i$, $H_{i,T}=H_i\cap T_i$, $\widehat{G}_i=\psi_i(G_i)$ and $\widehat{H}_i=\psi_i(H_i)$. We have $H_{i,T}\subset G_{i,T}$ and $\widehat{H}_i\subset\widehat{G}_i$ for each $i\in\{1,2\}$.

We start with some restrictions on the ``toric part'' of our groups and then deal with the simplest case when this part of the larger group $G$ is trivial.

\begin{lemma}\label{lem: dP6 trivial toric part}
	Let $S_1$ be a del Pezzo surface of degree 6 and $S_1\dashrightarrow S_2$ be a Sarkisov $G$-link of type II, centred at a $G$-point of degree $d$, where $d\in\{2,3\}$. Let $H\subset G$ be a subgroup, and assume there is an $H$-link $S_1\dashrightarrow S_2$ at the same point. Then the following hold:
	\begin{enumerate}
		\item\label{dP6 trivial toric part 1} The subgroups $H_{i,T}$ and $G_{i,T}$ are either trivial or of order $d$; \textit{i.e.}~for each $i$ we have the following possibilities:
		\begin{enumerate}
			\item $H_{i,T}=G_{i,T}=\id$, 
			\item $H_{i,T}=\id\subset G_{i,T}\simeq\Cyc_d$, 
			\item $H_{i,T}=G_{i,T}\simeq\Cyc_d$.
		\end{enumerate}
		\item\label{dP6 trivial toric part 2} Assume that $S_2$ is $H$-isomorphic to $S_1$. If\, $G_{1,T}=\id$, then $S_2$ is $G$-isomorphic to $S_1$.
		\item\label{dP6 trivial toric part 3} If\, $G_{1,T}\simeq\Cyc_d$,  then $G_{2,T}\simeq\Cyc_d$.
	\end{enumerate}
\end{lemma}
\begin{proof}
	Since each $S_i$ is $H$-del Pezzo, by Lemma~\ref{lem: dP6 minimal groups} the pair $(\widehat{H}_i,\widehat{G}_i)$ must be one of the following: $(\Cyc_6,\Cyc_6)$, $(\Sym_3,\Sym_3)$, $(\Dih_6,\Dih_6)$, $(\Cyc_6,\Dih_6)$, $(\Sym_3,\Dih_6)$. Furthermore, since $S_i$ admits an $H_i$-orbit of degree $d$ (which is also a $G_i$-orbit), both $H_i$ and $G_i$ have index $d$ subgroups which fix a point on $S_i\setminus\Sigma_i$ and thus do not intersect $H_{i,T}$ and $G_{i,T}$, respectively. We observe that the order of $G_{i,T}$ is at most $d$ and deduce statement~\ref{dP6 trivial toric part 1}.
	
	\ref{dP6 trivial toric part 2} Suppose that $G_{1,T}=\id$; then $H_1\simeq\widehat{H}_1$, $G_1\simeq\widehat{G}_1$, and the statement is tautological when $\widehat{H}_1=\widehat{G}_1$. If $\widehat{H}_1\ne\widehat{G}_1$, then $G_1\simeq\widehat{G}_1=\psi_1(\Aut(S_1))\simeq\Dih_6$. Since $G_1\simeq G_2$, we must have $G_2\simeq\widehat{G}_2$. Indeed, otherwise $G_{2,T}=G_2\cap T_2\ne\id$ and thus $G_{2,T}\simeq\Cyc_d$. But then $\widehat{G}_2\simeq\Cyc_2\times\Cyc_2$ (if $d=3$) or $\widehat{G}_2\simeq\Sym_3$ (if $d=2$). In the former case, $S_2$ is not $G$-del Pezzo by Lemma~\ref{lem: dP6 minimal groups}, while in the latter case $G_{2,T}\simeq\Cyc_2$ is necessarily the centre of $G_2\simeq\Cyc_2\times\Sym_3$; however, an involution from $T_2$ cannot commute with an automorphism $\tau'\circ\theta$, where $\tau'\in T_2$, mapped to a rotation of order 3. Now, having $G_1\simeq\psi_1(\Aut(S_1))$ and $G_2\simeq\psi_2(\Aut(S_2))$, both groups are conjugate to the ``standard'' one generated by $\varrho$ and $\sigma$; see Lemma~\ref{lem: dP6 standard form of a group}.
	
	\ref{dP6 trivial toric part 3} Assume that $G_{1,T}\simeq\Cyc_d$. If $G_{2,T}=\id$, then $G_1\simeq G_2\simeq\widehat{G}_2\in\{\Cyc_6,\Sym_3^{\rm min},\Dih_6\}$. If $d=3$, then $\widehat{G}_1\in\{\Cyc_2,\Cyc_2^2\}$ and thus $S_1$ is not $G$-del Pezzo. If $d=2$, then $\widehat{G}_1\simeq\Cyc_3$ or $G_{1,T}\simeq\Cyc_2$ is the centre of $G_1\simeq\Cyc_2\times\Sym_3$. In the first case, $S_1$ is not $G$-del Pezzo, while the second case is impossible, as was noticed before.
\end{proof}

\medskip

\begin{proposition}\label{prop: dP6 links}
	Let $S_1$ be a del Pezzo surface of degree 6 and $S_1\dashrightarrow S_2$ be a Sarkisov $G$-link of type II, centred at a $G$-point of degree $d$, where $d\in\{2,3\}$. Let $H\subset G$ be a subgroup, and assume there is an $H$-link $S_1\dashrightarrow S_2$ at the same point. If\, $S_2$ is $H$-isomorphic to $S_1$, then it is also $G$-isomorphic to $S_1$.
\end{proposition}

\begin{proof}

By Lemma~\ref{lem: dP6 trivial toric part},  we may assume $G_{1,T}\simeq\Cyc_d$. Let $\alpha\colon S_1\iso S_2$ be an $H$-isomorphism from the statement; it maps $T_1=S_1\setminus\Sigma_1$ isomorphically onto $T_2=S_2\setminus\Sigma_2$ and induces isomorphisms $H_{1,T}\simeq H_{2,T}$ and $\widehat{H}_1\simeq\widehat{H}_2$. Define $G_2'=\alpha\circ G_1\circ\alpha^{-1}\subset\Aut(S_2)$. We claim that $G_2'=\gamma^{-1}\circ G_2\circ\gamma$ for some $\gamma\in\Aut(S_2)$. This will finish the proof as the surfaces $S_1$ and $S_2$ are then $G$-isomorphic via the map $\gamma\circ\alpha$. 

Set $\widehat{G}_2'=\psi_2(G_2')$ and $G_{2,T}'=G_2'\cap T_2$. Firstly, note that  $\widehat{G}_2=\widehat{G}_2'$. Since $\widehat{G}_2'\simeq\widehat{G}_1$ and there is only one subgroup of $\Dih_6$ in each isomorphism class for which $S_2$ is $G$-del Pezzo, it is enough to show that $\widehat{G}_2\simeq\widehat{G}_1$. Both are subgroups of $\Dih_6$ of the same order, as $G_1\simeq G_2$ and $G_{1,T}\simeq G_{2,T}$ by Lemma~\ref{lem: dP6 trivial toric part}\ref{dP6 trivial toric part 3}; hence it remains to see that one cannot have $\widehat{G}_1\simeq\Cyc_6$ and $\widehat{G}_2\simeq\Sym_3$ (or vice versa); but this is implied by $\widehat{H}_1\simeq\widehat{H}_2$. 

Secondly, we claim that $G_{2,T}=G_{2,T}'$. Recall that the group $\psi_2(\Aut(S_2))\simeq\Sym_3\times\Cyc_2$ acts on $T_2$; namely, $\Sym_3$ acts on the torus $T_2\simeq(\kk^*)^3/\kk^*$ by permuting the coordinates, and the action of $\Cyc_2$ is the inversion; we denote this action of $\Dih_6$ on $T_2$ by $\varphi$. The groups $\widehat{G}_2'$ and $\widehat{G}_2$, being both $\Sym_3^{\rm min}$, $\Cyc_6$ or $\Dih_6$, contain $g=r^2$. Let $\tau=(t,\id)\in T_2\rtimes\rho_2(\Aut(S_2))$ be an element of order $d\in\{2,3\}$ which generates $G_{2,T}$ (or $G_{2,T}'$) and $(u,g)$ be an element of $G_2$ (respectively, of $G_2'$) which is mapped to $(\id,g)$ by $\psi_2$. Then $(u,g)^{-1}=(\varphi_{g^{-1}}(u^{-1}),g^{-1})$. Therefore, $(\varphi_{g^{-1}}(u^{-1}),g^{-1})(t,\id)(u,g)=(\varphi_{g^{-1}}(u^{-1}tu),1)=(\varphi_{g^{-1}}(t),1)$ is a power of $\tau=(t,\id)$. If $t=[(1,a,b)]\in T_2\simeq(\kk^*)^3/\kk^*$, we must have $[(a,b,1)]=[(1,a,b)]$ or $[(a,b,1)]=[(1,a^2,b^2)]$, where $a$ and $b$ are primitive $\supth{d}$ roots of unity. This implies that $d=3$ and $\tau=([(1,\omega,\omega^2)],\id)$ or $\tau=([(1,\omega^2,\omega)],\id)$, which both generate the same subgroup of $T_2$. 

We conclude by applying Lemma~\ref{lem: dP6 standard form of a group}. Namely, the groups $G_2$ and $G_2'$ are both conjugate to $\langle\tau,\varrho,\sigma\rangle$ or to $\langle\tau,\varrho\rangle$ if their $\psi_2$-images are $\Dih_6$ or $\Cyc_6$, respectively, or to the group $\langle\tau,H_2\rangle$ if their $\psi_2$-images are $\Sym_3^{\min}$.
\end{proof}

\begin{proposition}
	If a del Pezzo surface of degree 6 is $H$-birationally rigid, then it is also $G$-birationally rigid.
\end{proposition}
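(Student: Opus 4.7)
The plan is to invoke the Sarkisov program proposition established above: $S$ is $G$-birationally rigid if and only if every elementary Sarkisov $G$-link $\chi\colon S\dashrightarrow S'$ has a $G$-equivariantly isomorphic target $S'\simeq S$. By \cite[Propositions 7.12, 7.13]{DolgachevIskovskikh}, a Sarkisov $G$-link from a del Pezzo surface of degree $6$ is either of type I (blowing up a $G$-invariant $0$-cycle $\mathfrak{p}$ on $S$ to produce a $G$-conic bundle over $\PP^1$) or of type II (ending at another $G$-del Pezzo surface $S'$). I would handle the two types separately.

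First, I would show that every type II $G$-link produces a target that is $G$-equivariantly isomorphic to $S$. The argument uses the hexagon $\Sigma$ of six $(-1)$-curves and the short exact sequence $1\to T\to\Aut(S)\to\Dih_6\to 1$. By Lemma~\ref{lem: dP6 minimal groups}, $\rho(G)\in\{\Cyc_6,\Sym_3,\Dih_6\}$, and in each case $\rho(G)$ contains an element exchanging the two triples of alternating sides of $\Sigma$ (the central rotation $r^3$ when $\rho(G)\supseteq\Cyc_6$, or a ``vertex-axis'' reflection when $\rho(G)=\Sym_3$). Such an element lifts to a biregular involution of $S$ that, in the projection onto one $\PP^2$-factor, realizes the classical quadratic Cremona transformation. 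I would then verify that every type II $G$-link is, up to composition with this involution and with the torus action, the identity map, so that $S'\simeq S$ as $G$-varieties, in analogy with the Bertini/Geiser arguments used for degrees $2$ and $3$.

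Next, I would handle type I links by contraposition. Suppose that a type I $G$-link from $S$ exists with center $\mathfrak{p}\subset S$. Since $H\subseteq G$, the subscheme $\mathfrak{p}$ is $H$-invariant, and the resulting $G$-conic bundle $\pi\colon Y\to\PP^1$ is automatically an $H$-equivariant $\PP^1$-fibration. Running the $H$-minimal model program on $Y$ relatively over $\PP^1$ (to contract any extra $H$-invariant $(-1)$-curves contained in fibers of $\pi$), one obtains an $H$-Mori fiber space $Y'\to\PP^1$ to which $S$ is $H$-birational. This contradicts condition~(1) of $H$-birational rigidity. Therefore, under the hypothesis that $S$ is $H$-birationally rigid, no type~I $G$-link can exist, and combined with the type II analysis, $S$ is $G$-birationally rigid.

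The main obstacle will be the type II step: ruling out every potentially exotic $G$-orbit on $S$ that could serve as the center of a type II link producing a non-$G$-isomorphic target. This requires systematic bookkeeping using Lemma~\ref{lem: dP6 fixed points} (constraining $G$-orbits relative to $\Sigma$), Lemma~\ref{lem: dP6 minimal groups} (the list of possible $\rho(G)$), Lemma~\ref{lem: dihedral group subgroups} (subgroups of $\Dih_6$), and the transitive action of the torus $T$ on the open stratum $S\setminus\Sigma$, to show that any such orbit is brought back to the initial configuration by the swap involution above.
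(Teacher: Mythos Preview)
There is a genuine gap in your type~II analysis. According to the Iskovskikh classification you cite, all Sarkisov links starting from a minimal sextic del Pezzo surface $S$ are of type~II, so your type~I discussion is in fact vacuous. For links centred at an orbit of degree $d>1$, the target is indeed $G$-isomorphic to $S$ (a Bertini or Geiser involution, or a link to another sextic), and your Cremona-lift idea is in that spirit. But there is also a type~II link centred at a single $G$-fixed point ($d=1$): one blows up the point to obtain a quintic del Pezzo surface and then contracts a $G$-orbit of three $(-1)$-curves to reach $S'\simeq\PP^1\times\PP^1$. This target is not isomorphic to $S$, so your assertion that ``every type~II $G$-link produces a target that is $G$-equivariantly isomorphic to $S$'' is false, and no involution of $S$ can absorb this link.

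The paper handles this $d=1$ link by a contrapositive that invokes Lemma~\ref{lem: dP6 fixed points} rather than the Cremona involution. By Lemma~\ref{lem: dP6 minimal groups} one writes $H=N_\bullet\rho(H)$ and $G=M_\bullet\rho(G)$ with $N\subseteq M\subset T$. If $N\neq\id$ then $M\neq\id$, and Lemma~\ref{lem: dP6 fixed points} forbids any $G$-fixed point, so the dangerous $d=1$ link does not exist and $S$ is $G$-birationally rigid. If $N=\id$ then $H\in\{\Cyc_6,\Sym_3\}$ sits inside $\Aut(\Sigma)$ and fixes the explicit point $([1{:}1{:}1],[1{:}1{:}1])\in S\setminus\Sigma$; the $d=1$ link from that point shows that $S$ is already not $H$-birationally rigid, contradicting the hypothesis. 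Your contrapositive strategy for type~I links was pointed in the right direction, but it targets conic-bundle outputs; the link you must actually exclude is a type~II link landing on the $G$-Fano $\PP^1\times\PP^1$, and this is where the argument must use the $H$-hypothesis.
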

\begin{proof}
Possible Sarkisov $G$-links were described before Example~\ref{ex: dP6}; recall that $G$-birational Bertini or Geiser involutions always lead to a $G$-isomorphic surface. If $S$ admits a $G$-link to $\PP^1\times\PP^1$ centred at a $G$-fixed point, then the same point is fixed by $H$ and hence there is an $H$-link to $\PP^1\times\PP^1$, so we have a contradiction.  Assume that there is a $G$-link $S\dashrightarrow S'$ at a $G$-orbit of cardinality 2 or 3. Then either this orbit  contains an $H$-fixed point, or it gives rise to an $H$-link $S\dashrightarrow S'$. In the former case, we get an $H$-link to $\PP^1\times\PP^1$, contradicting the $H$-birational rigidity of $S$, while in the second case, the $H$-birational rigidity of $S$ implies that $S'$ is $H$-isomorphic to $S$. Now Proposition~\ref{prop: dP6 links} shows that $S'$ is also $G$-isomorphic to $S$.
\end{proof}

\begin{remark}\label{rem: Iskovskikh example}
	If there is a $G$-link $S\dashrightarrow\PP^1\times\PP^1$ centred at a $G$-fixed point, then $G\cap T=\id$ by Lemma~\ref{lem: dP6 fixed points} and hence $\psi$ maps $G$ isomorphically onto one of the following subgroups of $\psi(\Aut(S))$: $\Cyc_6$, $\Sym_3^{\rm min}$ or $\Dih_6$.  If their actions are in the ``standard'' form as in Lemma~\ref{lem: dP6 standard form of a group}, the fixed point becomes $([1:1:1],[1:1:1])$. Making a Sarkisov link centred at this point, we arrive at $\PP^1\times\PP^1$ acted on by $G$. Explicitly, one blows up the fixed point and then blows down the preimages of the three genus zero curves passing through this point; see \cite{IskovskikhNonConjugate} for more details and related discussion.
\end{remark}

\section{\texorpdfstring{$\boldsymbol{G}$}{G}-birational rigidity of quadric surfaces}\label{sec: dP8}

In this section, we investigate carefully the case of $S=\PP^1\times\PP^1$. In Section~\ref{subsec: dP8 G-links}, we study some Sarkisov links on $S$ and show how to complete the proof of our main theorem. In Section~\ref{subsec: dP8 complete classification}, we present a detailed analysis of finite group actions on $S$ and $G$-birational rigidity in each case, more in the spirit of \cite{SakovicRigidity,WolterEquivariantBirationalDP5}.

\subsection{Sarkisov $\boldsymbol{G}$-links}\label{subsec: dP8 G-links} By \cite[Propositions 7.12 and~7.13]{DolgachevIskovskikh}, every Sarkisov $G$-link starting from $S$ is either of type I and of the form 
\begin{equation}\label{eq: type I link}
\xymatrix{
	&& T\ar@{->}[dll]_{\eta}\ar@{->}[d]^{\pi}\\
	S\ar[d]&& \PP^1\ar[dll]\\
	\pt\rlap{,} &&
}
\end{equation}
where $\eta$ is a blow-up of a degree 2 point, $\pi\colon T\to \PP^1$ is a $G$-conic bundle with two singular fibres (in fact, it is necessarily a del Pezzo surface of degree 6, see \cite[Theorem 5]{Iskovskikh_minimal}) or is of type II and is represented by the diagram (\ref{eq: Sarkisov II}), where $\eta$ blows up a point of degree $d$, $\eta'$ blows up a point of degree $d'$, and one of the following holds:
\begin{enumerate}
	\item $S\simeq S'$, $d=d'=7$, $\chi$ is a birational Bertini involution;
	\item $S\simeq S'$, $d=d'=6$, $\chi$ is a birational Geiser involution;
	\item $S'$ is a del Pezzo surface of degree 5, $d=5$, $d'=2$;
	\item $S'\simeq\PP^1\times\PP^1$, $d=d'=4$;
	\item $S'$ is a del Pezzo surface of degree 6, $d=3$, $d'=1$;
	\item $S'\simeq\PP^2$, $d=1$, $d'=2$.
\end{enumerate}
In particular, if $G$ does not fix a point on $S$ and is not isomorphic to any of the groups
\begin{equation}\label{eq: dP8 groups}
	\Cyc_5,\quad \Cyc_6,\quad \Sym_3,\quad\Dih_5,\quad\Dih_6,\quad\GA_1(\FF_5),\quad\Alt_5,\quad\Sym_5,
\end{equation}
then all $G$-links of type II from $S$ lead to a quadric surface $S'\simeq\PP^1\times\PP^1$ (recall Lemmas~\ref{lemma: dP5 minimal groups},~\ref{lem: dP6 fixed points} and~\ref{lem: dP6 minimal groups}). \emph{A priori}, the surface $S'$ does not have to be $G$-isomorphic to $S$ (we saw such phenomena in Section~\ref{sec: dP6}) unless we deal with $G$-birational Bertini and Geiser involutions. It turns out that $G$-links centred at a point of degree 4 also lead to a $G$-isomorphic del Pezzo surface $S'\simeq\PP^1\times\PP^1$. As was pointed out to me by Andrey Trepalin, this holds in the arithmetic, see \cite[Lemma 4.3]{TrepalinDP8},  and even mixed settings; we limit ourselves to the geometric situation. 

\begin{proposition}\label{prop: dP8 link at degree 4}
	Let $\chi\colon S\dashrightarrow S'$ be a Sarkisov $G$-link of type II centred at a point of degree~4. Then $S'$ is $G$-isomorphic to $S$.
\end{proposition}
\begin{proof}
	Assume that $\chi=\eta'\circ\eta^{-1}\colon S\dashrightarrow S'$ is given by the diagram (\ref{eq: Sarkisov II}); then $T$ is a del Pezzo surface of degree 4. It is a blow-up of $\PP_\kk^2$ in five points $p_1,\ldots,p_5$ in general position. Denote by $E_1,\ldots,E_5$ the exceptional divisors of this blow-up, by $L_{ij}$ for $i,j\in\{1,\ldots,5\}$ with $i<j$ the strict transforms of the lines through $p_i$ and $p_j$, and by $Q$ the strict transform of the conic through $p_1,\ldots,p_5$. These are sixteen $(-1)$-curves on $T$. Their intersection graph is the Clebsch strongly regular quintic graph on sixteen vertices, shown on~Figure~\ref{figure: Clebsch}. Up to renumbering the points, we may assume that the curves $\Sigma=\{E_1,E_2,E_3, L_{45}\}$ are the exceptional divisors of the blow-up $\eta$, while the curves $\Sigma'=\{L_{12},L_{13},L_{23},Q\}$ are the exceptional divisors of $\eta'$. Recall, see \cite[Corollary~8.2.40]{DolgachevClassicalAG}, that $\Aut(T)$ injects into the Weyl group $\WW(D_5)\simeq\Cyc_2^4\rtimes\Sym_5$, the automorphism group of the Clebsch graph. The quartic del Pezzo surface $T$ is isomorphic to an intersection of two quadrics
	\[
	\sum_{i=1}^{5}x_i^2=\sum_{i=1}^{5}\lambda_ix_i^2=0
	\]
	in $\PP_\kk^4$. The group $\Sym_5$ acts by permutation of coordinates (and naturally acts on the indices of $E_i$ and $L_{jk}$), while $\Cyc_2^4$ acts as a diagonal subgroup of $\PGL_5(\kk)$. There are two types of involutions in this group, $\iota_{ij}$ and $\iota_{ijkl}$, which switch the signs of $x_i,x_j$ and $x_i,x_j,x_k,x_l$, respectively. Equivalently, $\iota_{ijkl}$ coincides with the automorphism $\jmath_t$, where $t\in\{1,2,3,4,5\}\setminus\{i,j,k,l\}$, switching the sign of $x_t$. As explained in \cite[Section~7]{DolgachevDuncanFixed}, the $\jmath_t$ are given by de Jonqui\`{e}res involutions of the plane model centred at $p_t$ and interchange $E_i$ with $L_{it}$ and $E_t$ with $Q$. To recover the action of $\jmath_t$ on the curves $L_{ij}$ for $i,j\ne t$, we notice that two disjoint curves $E_i$ and $E_j$ are both intersected by exactly two others, $L_{ij}$ and $Q$ (in other words, any two non-adjacent vertices of the Clebsch graph~\ref{figure: Clebsch} have two common neighbours). Therefore, $\jmath_t(L_{ij})$ is the curve which intersects $L_{it}$ and $L_{jt}$, and it is different from $\jmath_t(Q)=E_t$; hence, it is $L_{sr}$, where $s,r\in\{ 1,2,3,4,5\}\setminus\{i,j,t\}$.
	
	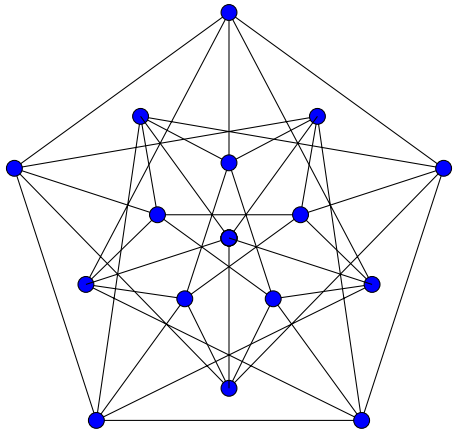
\begin{figure}[H]
		\begin{center}
			\begin{tikzpicture}
				\draw (18:1cm) -- (162:1cm) -- (306:1cm) -- (90:1cm) -- (234:1cm) -- cycle;
				\draw (18:3cm) -- (90:3cm) -- (162:3cm) -- (234:3cm) -- (306:3cm) -- cycle;
				\draw (18:1cm) -- (54:2cm) --(90:1cm) -- (126:2cm) -- (162:1cm) -- (198:2cm) -- (234:1cm) -- (270:2cm) -- (306:1cm) -- (342:2cm) -- cycle;
				\draw (18:3cm) -- (126:2cm) -- (234:3cm)-- (342:2cm) -- (90:3cm) -- (198:2cm) -- (306:3cm) -- (54:2cm) -- (162:3cm) -- (270:2cm) -- cycle;
				\foreach \x in {18,90,162,234,306}
				{\draw (\x:1cm) -- (\x:3cm);
					\draw[black,fill=blue] (\x:3cm) circle (3pt);
					\draw[black,fill=blue] (\x:1cm) circle (3pt);}
				\foreach \x in {54, 126, 198, 270, 342}
				{\draw[black,fill=blue] (\x:2cm) circle (3pt);
					\draw[black,fill=blue] (\x:0cm) circle (3pt);
					\draw (\x:0cm) -- (\x:2cm);}
			\end{tikzpicture}
		\end{center}
		\caption{The Clebsch graph}
		\label{figure: Clebsch}
	\end{figure}
	
	Now, one easily checks that the involutions $\iota_{*5}=\jmath_*\circ\jmath_5$ and $\iota_{*4}=\jmath_*\circ\jmath_4$ do not preserve the set $\Sigma$. Hence, the subgroup of $\WW(D_5)$ which preserves $\Sigma$ is generated by $(12),(123),\iota_{12}$ and $(45)$, and is isomorphic to $\Sym_4\times\Cyc_2$. The involution $\iota_{45}$ commutes with this group and maps $\Sigma$ onto $\Sigma'$. Since $\iota_{45}$ actually corresponds to an automorphism of $T$, we conclude that the blow-down $\eta'$ yields a $G$-isomorphic del Pezzo surface $S'$. 
\end{proof}

\begin{corollary}\label{cor: dP8 rigidity and orbits}
	The surface $\PP^1\times\PP^1$ is $G$-birationally rigid $($as a $G$-del Pezzo surface$)$ if and only if the size of every $G$-orbit in general position is $4$ or at least~$6$. $($Here and everywhere below, by ``general position'' we mean that the blow-up of this orbit gives a del Pezzo surface.$)$
\end{corollary}
\begin{proof}
	The sufficiency follows from Proposition~\ref{prop: dP8 link at degree 4} and the fact that $G$-birational involutions of Bertini and Geiser yield a $G$-isomorphic surface. Conversely, if $\PP^1\times\PP^1$ is $G$-birationally rigid, then it does not admit Sarkisov $G$-links centred at $G$-points of degree $d\in\{1,2,3,5\}$: by our assumption, the blow-up of every such orbit gives a del Pezzo surface $T$ with $\Pic(T)^G\simeq\ZZ^2$; hence the 2-ray game (see Remark~\ref{rem: rank r fibration}) provides a Sarkisov $G$-link to a $G$-del Pezzo surface of degree $d'\ne 8$.
\end{proof}

\begin{remark}\label{rem: dP4 auto}
	At the same time, it is often possible to exclude the possibility of $G$-links $\chi\colon S\dashrightarrow\PP^1\times\PP^1$ centred at a point of degree 4. If such a link exists and is represented by the diagram (\ref{eq: Sarkisov II}), then $T$ is a del Pezzo surface of degree 4, so a natural obstruction to the existence of $\chi$ is the impossibility of an embedding $G\hookrightarrow\Aut(T)$. Luckily, all possible automorphism groups $\Aut(T)$ of smooth del Pezzo surfaces of degree 4 are classified, see \cite{HosohQuarticDP,DolgachevClassicalAG}, and they are the following (see the proof of Proposition~\ref{prop: dP8 link at degree 4} for the description of these semidirect products):
	\[
	\Cyc_2^4,\quad \Cyc_2^4\rtimes\Cyc_2,\quad \Cyc_2^4\rtimes\Cyc_4,\quad\Cyc_2^4\rtimes\Sym_3,\quad \Cyc_2^4\rtimes\Dih_5.              
	\]  
\end{remark}

	The following proposition finishes the proof of our main theorem. An alternative way will be sketched in Remark~\ref{rem: proof bis}.
\begin{proposition}
	If the del Pezzo surface $\PP^1\times\PP^1$ is $H$-birationally rigid, then it is also $G$-birationally rigid.
\end{proposition}
\begin{proof}
	Suppose that $S=\PP^1\times\PP^1$ is not $G$-birationally rigid. By Corollary~\ref{cor: dP8 rigidity and orbits}, there is a $G$-orbit $\Sigma$ of size $|\Sigma|\in\{1,2,3,5 \}$; moreover, $\Sigma$ is in general position on $S$. Write $\Sigma=\Sigma_1\sqcup\cdots\sqcup\Sigma_r$, where each $\Sigma_i$ is an $H$-orbit and $|\Sigma_i|\leqslant |\Sigma_j|$ for $i\leqslant j$. Clearly, $r\geqslant 2$ since $r=1$ implies that $\Sigma$ is an $H$-orbit and hence $S$ is not $H$-birationally rigid. If $r\geqslant 2$ and $|\Sigma|\in\{1,2,3\}$, then $H$ admits an orbit of size 1, and thus $S$ has an $H$-link to $\PP^2$. The same reasoning applies to the case $|\Sigma|=5$ and $r\geqslant 3$, and we conclude that $r=2$, $|\Sigma_1|=2$, $|\Sigma_2|=3$. Since $\Sigma_1$ is in general position, Corollary~\ref{cor: dP8 rigidity and orbits} again gives a contradiction with $H$-birational rigidity. 
\end{proof}

The main theorem is proven.

\subsection{Finite groups acting on quadric surfaces}\label{subsec: dP8 complete classification} 
We now proceed with a deeper analysis of finite group actions on $S=\PP^1\times\PP^1$. Recall that
\[
\Aut(S)\simeq (\PGL_2(\kk)\times\PGL_2(\kk))\rtimes\Cyc_2,
\]
where the action of $\Cyc_2$ is given by exchanging the factors. Finite subgroups of the direct product can be determined using the so-called \emph{Goursat's lemma}. Recall that the \emph{fibred product} of two groups $G_1$ and $G_2$ over a group $D$ is 
\[
G_1\times_D G_2=\{(g_1,g_2)\in G_1\times G_2\colon \alpha(g_1)=\beta(g_2)\},
\] 
where $\alpha\colon G_1\to D$ and $\beta\colon G_2\to D$ are some surjective homomorphisms. Although the notation does not reflect it, the data defining $G_1\times_D G_2$ is not only
the groups $G_1$, $G_2$ and $D$ but also the homomorphisms $\alpha$,~$\beta$.

\begin{lemma}[{{Goursat's lemma}, \textit{cf.} \cite[p.~47]{Goursat}}]\label{lem: Goursat}
	Let $A$ and $B$ be two groups. There is a bijective correspondence between subgroups $G\subset A\times B$ and 5-tuples $\{G_A,G_B,K_A,K_B,\varphi\}$, where $G_A$ is a subgroup of $A$, $K_A$ is a normal subgroup of $G_A$, $G_B$ is a subgroup of $B$, $K_B$ is a normal subgroup of $G_B$ and $\varphi\colon G_A/K_A\iso G_B/K_B$ is an isomorphism. More precisely, the group corresponding to this 5-tuple is
	\[
	G=\{(a,b)\in G_A\times G_B\colon \varphi(aK_A)=bK_B\}.
	\]
	Conversely, let $G$ be a subgroup of $A\times B$. Denote by $p_A\colon A\times B\to A$ and $p_B\colon A\times B\to B$ the natural projections, and set $G_A=p_A(G)$ and $G_B=p_B(G)$. Further, let
	\begin{gather*}
	K_A=\ker p_B|_G=\{(a,\id)\in G,\ a\in A\},\\ K_B=\ker p_A|_G=\{(\id,b)\in G,\ b\in B\},
	\end{gather*}
	whose images by $p_A$ and $p_B$ define normal subgroups of $G_A$ and $G_B$, respectively $($denoted the same$)$. Let $\pi_A\colon G_A\to G_A/K_A$ and $\pi_B\colon G_B\to G_B/K_B$ be the canonical projections. The map $\varphi\colon G_A/K_A\to G_B/K_B$,  $\varphi(aK_A)=bK_B$, where $b\in B$ is any element such that $(a,b)\in G$, is an isomorphism. Furthermore, $G=G_A\times_D G_B$, where $D=G_A/K_A$, $\alpha=\pi_A$ and $\beta=\varphi^{-1}\circ\pi_B$.

\end{lemma}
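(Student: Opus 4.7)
The plan is to establish the two constructions as mutually inverse bijections. Most of the content is routine group-theoretic bookkeeping, so I only flag the points that actually require thought.

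Starting from a subgroup $G\subseteq A\times B$, I define $G_A=p_A(G)$, $G_B=p_B(G)$, and identify
\[
K_A=\{a\in G_A : (a,\id)\in G\},\qquad K_B=\{b\in G_B : (\id,b)\in G\}.
\]
Normality of $K_A$ in $G_A$ is immediate by conjugation: given $a\in K_A$ and $g\in G_A$, I pick any $b\in B$ with $(g,b)\in G$, and compute $(g,b)(a,\id)(g,b)^{-1}=(gag^{-1},\id)\in G$. The analogous argument handles $K_B\trianglelefteq G_B$.

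The crucial step is the construction of $\varphi\colon G_A/K_A\to G_B/K_B$, $aK_A\mapsto bK_B$ for any $b$ with $(a,b)\in G$. Well-definedness is where the argument really lives: if $(a,b),(a',b')\in G$ and $a^{-1}a'\in K_A$, then $(a^{-1}a',\id)\in G$, so
\[
(\id,b^{-1}b')=(a^{-1}a',\id)^{-1}(a,b)^{-1}(a',b')\in G,
\]
which gives $b^{-1}b'\in K_B$. That $\varphi$ is a homomorphism follows from the group operation on $G$; injectivity uses that $(a,b)\in G$ with $b\in K_B$ yields $(a,\id)=(a,b)(\id,b)^{-1}\in G$, hence $a\in K_A$; surjectivity is automatic from the definition of $G_B$.

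The identity $G=\{(a,b)\in G_A\times G_B : \varphi(aK_A)=bK_B\}$ then follows from two one-line checks: the inclusion $\subseteq$ is the definition of $\varphi$, and conversely, if $\varphi(aK_A)=bK_B$ with witness $(a,b_0)\in G$, then $b=b_0k$ for some $k\in K_B$ and $(a,b)=(a,b_0)(\id,k)\in G$. For the converse direction, given a 5-tuple I set $G=\{(a,b)\in G_A\times G_B : \varphi(aK_A)=bK_B\}$, which is a subgroup precisely because $\varphi$ is a homomorphism; applying the first construction to this $G$ then recovers the input 5-tuple by inspection, and the presentation $G=G_A\times_D G_B$ with the stated maps $\alpha,\beta$ is just the defining condition of $G$ rewritten. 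I do not anticipate any real obstacle here: the whole statement is a careful verification, and the only moderately subtle point is the well-definedness of $\varphi$.
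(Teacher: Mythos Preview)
Your proof is correct and is the standard verification of Goursat's lemma. Note that the paper does not actually supply its own proof of this statement: it is quoted with a citation to Goursat's original work and used as a black box, so there is no ``paper's approach'' to compare against. Your argument is exactly the classical one and covers all the necessary checks (normality, well-definedness of $\varphi$, bijectivity, and that the two constructions are mutually inverse).
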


\begin{corollary}\label{cor: Goursat}
	In the notation from Goursat's lemma, the subgroup $G\subset A\times B$ fits into the short exact sequence
	\[
	1\lra K_A\times K_B\lra G\lra D\lra 1.
	\]
\end{corollary}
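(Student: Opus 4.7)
The plan is to construct the sequence explicitly and verify exactness term by term, with essentially no computation beyond unwinding the statement of Goursat's lemma. Define the map $\psi\colon G\to D$ as the composition
\[
G\xrightarrow{\,p_A|_G\,} G_A\xrightarrow{\,\pi_A\,} G_A/K_A=D,
\]
so $\psi(a,b)=aK_A$. This is a homomorphism as a composition of two homomorphisms. The natural inclusion $K_A\times K_B\hookrightarrow G$ is the candidate for the first map; the first thing I would check is that it lands in $G$, which follows because for $a\in K_A$ and $b\in K_B$ we have $\varphi(aK_A)=\varphi(K_A)=K_B=bK_B$, so the Goursat condition $\varphi(aK_A)=bK_B$ is satisfied.

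Next, I would verify exactness at each term. Surjectivity of $\psi$ is immediate: by Goursat's lemma $p_A(G)=G_A$, and $\pi_A$ is surjective by construction. For the kernel, suppose $\psi(a,b)=K_A$, i.e.\ $a\in K_A$. Then $\varphi(aK_A)=\varphi(K_A)=K_B$, and the defining relation $\varphi(aK_A)=bK_B$ forces $b\in K_B$, so $(a,b)\in K_A\times K_B$. Conversely the preceding paragraph shows such $(a,b)$ lies in $G$ and is clearly killed by $\psi$. Injectivity of $K_A\times K_B\hookrightarrow G$ is just injectivity of the inclusion $K_A\times K_B\subseteq G_A\times G_B$.

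There is no real obstacle here; the only subtlety is the implicit identification in Goursat's lemma of $K_A$ with its image $p_A(K_A)\subseteq G_A$ (and similarly for $K_B$), which must be remembered when writing down the defining condition of $G$. Once this is kept straight, the sequence
\[
1\to K_A\times K_B\to G\xrightarrow{\,\psi\,} D\to 1
\]
is exact by the two verifications above.
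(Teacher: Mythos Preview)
Your proof is correct and takes essentially the same approach as the paper. The paper phrases the surjection $G\to D$ as the restriction to $G$ of $\alpha\times\beta\colon G_A\times G_B\to D\times D$, whose image is the diagonal $\Delta\simeq D$ and whose kernel is $K_A\times K_B$; your map $\psi=\pi_A\circ p_A|_G$ is precisely this composed with the first projection $\Delta\iso D$, so the two arguments are the same up to this cosmetic identification.
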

\begin{proof}
	Indeed, the restriction of the homomorphism $\alpha\times\beta\colon G_A\times G_B\to D\times D$ to $G$ has  kernel $K_A\times K_B$, and its image is isomorphic to $\Delta=\{(t,t)\in D\times D\}\simeq D$.
\end{proof}

Using Goursat's lemma, one can get the description of finite subgroups $G\subset\Aut(\PP^1\times\PP^1)$ for which $\Pic(\PP^1\times\PP^1)^G\simeq\ZZ$. Before doing that, let us recall the following classical result due to F.~Klein. 

\begin{proposition}[\textit{cf.} {\cite{KleinIcosahedron}}]\label{prop: Klein}
	If\, $\kk$ is an algebraically closed field of characteristic zero, then every finite subgroup of\, $\PGL_2(\kk)$ is isomorphic to $\Cyc_n$, $\Dih_n$ $($where $n\geqslant 1)$, $\Alt_4$, $\Sym_4$ or $\Alt_5$. Moreover, there is only one conjugacy class for each of these groups. 
\end{proposition}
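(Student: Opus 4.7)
The plan is to use the classical orbit-counting argument, going back to Klein himself. Let $G \subset \PGL_2(\kk)$ be a finite subgroup of order $N \geq 2$. Because $\kk$ is algebraically closed of characteristic zero, every non-identity element of $G$ lifts to a diagonalizable matrix and hence fixes exactly two points on $\PP^1$. Let $X \subset \PP^1$ denote the set of points with non-trivial stabilizer in $G$.

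Double-counting the pairs $\{(g,p) : g \in G\setminus\{e\},\ g\cdot p = p\}$ yields the identity $2(N-1) = \sum_{p\in X}(|\stab_G(p)|-1)$. Partitioning $X$ into $G$-orbits $O_1,\dots,O_r$ of stabilizer order $n_i$, so that $|O_i| = N/n_i$, one arrives at
\[
2 - \frac{2}{N} = \sum_{i=1}^{r}\left(1 - \frac{1}{n_i}\right).
\]
Since each summand is at least $1/2$ while the left-hand side is strictly less than $2$, one must have $r \leq 3$. The case $r = 1$ is immediately contradictory; for $r = 2$ one gets $n_1 = n_2 = N$, so $G$ fixes two points of $\PP^1$ and is cyclic. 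For $r = 3$ with $n_1 \leq n_2 \leq n_3$, elementary arithmetic forces $n_1 = 2$, and then either $n_2 = 2$ (dihedral case, $N = 2n_3$) or $(n_2, n_3, N) \in \{(3,3,12),\,(3,4,24),\,(3,5,60)\}$, corresponding to $\Alt_4$, $\Sym_4$ and $\Alt_5$ respectively.

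For existence, I would exhibit each group explicitly as a subgroup of $\PGL_2(\kk)$: cyclic groups via $t \mapsto \zeta t$ with $\zeta$ a root of unity, dihedral groups by adjoining the involution $t \mapsto 1/t$, and the three exceptional groups via explicit $2\times 2$ matrices with entries in $\ZZ[\zeta_n]$ that realise the rotation groups of the tetrahedron, octahedron and icosahedron (valid over any algebraically closed field of characteristic zero by the Lefschetz principle).

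The main obstacle is uniqueness of the conjugacy class for $\Alt_4$, $\Sym_4$ and $\Alt_5$. My approach would be to pass to the double cover: lift $G$ to its preimage $\widetilde G \subset \SL_2(\kk)$ under $\SL_2(\kk) \to \PGL_2(\kk)$, a central $\Cyc_2$-extension known as a binary polyhedral group. Character theory shows that $\widetilde G$ admits a unique faithful two-dimensional representation up to isomorphism; hence any two finite subgroups of $\GL_2(\kk)$ abstractly isomorphic to $\widetilde G$ are $\GL_2(\kk)$-conjugate, and their images in $\PGL_2(\kk)$ are $\PGL_2(\kk)$-conjugate. For cyclic and dihedral groups one argues more directly, using the 3-transitivity of $\PGL_2(\kk)$ on $\PP^1$ to bring the fixed-point configuration into standard position.
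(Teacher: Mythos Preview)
The paper does not prove this proposition at all; it is recorded as a classical fact with a reference to Klein's book, so there is nothing to compare your argument against on the paper's side.

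Your proof is the standard one and is essentially sound, with two caveats worth flagging. First, you correctly insert the hypothesis $\charact\kk=0$, which the proposition as printed omits; without it the statement is false (for instance $\PSL_2(\FF_q)\subset\PGL_2(\overline{\FF_p})$), and the orbit-counting step breaks down because non-identity elements of $p$-power order can fix a single point. Since the paper works in characteristic zero throughout, this is harmless, but you are right to make it explicit.

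Second, the assertion that each binary polyhedral group $\widetilde G$ has a \emph{unique} faithful two-dimensional representation is not literally correct: $2T$ has three, $2O$ has two, and $2I$ has two. What actually makes the conjugacy argument go through is that in each case the different faithful $2$-dimensional irreducibles of $\widetilde G$ either differ by tensoring with a one-dimensional character (for $2T$ and $2O$), and hence have identical image in $\PGL_2(\kk)$, or are exchanged by an outer automorphism of $\widetilde G$ (for $2I$), and hence have conjugate image. You should also check that the preimage of $G$ in $\SL_2(\kk)$ really is the binary polyhedral group and not the split extension $G\times\Cyc_2$; this follows because none of $\Alt_4$, $\Sym_4$, $\Alt_5$ admits a faithful two-dimensional linear representation. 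With these clarifications the argument is complete.
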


Every group $G\subset\Aut(\PP^1\times\PP^1)$ fits into the short exact sequence
\[
1\lra G_\circ\lra G\lra\widehat{G}\lra 1,
\]
where $G_\circ=G\cap (\PGL_2(\kk)\times\PGL_2(\kk))$ and $\widehat{G}\subseteq\Cyc_2$. 

\begin{proposition}[\textit{cf.} {\cite[Lemma 3.2]{TrepalinDPhigh}}]\label{prop: dP8 minimal groups}
	Let $G\subset\Aut(S)$ be a finite subgroup such that $\Pic(S)^G\simeq\ZZ$. Then $G\simeq (F\times_D F)_\bullet\Cyc_2$, where $F$ is cyclic, dihedral or one of the groups $\Alt_4$, $\Sym_4$, $\Alt_5$. Moreover, for every such group $G$,  we have $\Pic(S)^G\simeq\ZZ$.
\end{proposition}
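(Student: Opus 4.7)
The plan is to analyze the action of $G$ on $\Pic(S)$ together with the structure of $\Aut(S)$, and then combine Goursat's lemma with Klein's classification. First, observe that $\Pic(S) = \ZZ f_1 \oplus \ZZ f_2$, where $f_1, f_2$ are the classes of the two rulings. The subgroup $\PGL_2(\kk) \times \PGL_2(\kk)$ acts trivially on $\Pic(S)$, while the swap involution $\tau$ exchanges $f_1$ and $f_2$. Consequently, the $G$-action on $\Pic(S)$ factors through $\widehat G \subseteq \Cyc_2$, and $\Pic(S)^G \simeq \ZZ$ is equivalent to $\widehat G = \Cyc_2$, i.e.\ to $G$ containing an element that swaps the two rulings. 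This already takes care of the ``moreover'' assertion: any $G$ projecting onto $\Cyc_2$ automatically has Picard rank one invariants.

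For the structural description, fix $\sigma \in G \setminus G_\circ$ and write $\sigma = (\alpha, \beta)\tau$ with $\alpha, \beta \in \PGL_2(\kk)$. Apply Goursat's lemma (Lemma~\ref{lem: Goursat}) to $G_\circ \subset \PGL_2(\kk) \times \PGL_2(\kk)$ to obtain subgroups $F_i = p_i(G_\circ)$, normal subgroups $K_i \lhd F_i$, and an isomorphism $\varphi\colon F_1/K_1 \iso F_2/K_2 =: D$, so that $G_\circ = F_1 \times_D F_2$. Since $G_\circ \lhd G$, conjugation by $\sigma$ is an automorphism of $G_\circ$, and a direct computation using $\tau(a,b)\tau^{-1}=(b,a)$ yields
\[
\sigma(a,b)\sigma^{-1} = (\alpha b \alpha^{-1}, \beta a \beta^{-1}).
\]
Projecting to the first factor gives $F_1 = \alpha F_2 \alpha^{-1}$ and, symmetrically, $F_2 = \beta F_1 \beta^{-1}$. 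In particular, $F_1$ and $F_2$ are conjugate inside $\PGL_2(\kk)$, so replacing $G$ by its conjugate under $(e, \alpha^{-1}) \in \PGL_2(\kk)^2$ we may assume $F_1 = F_2 =: F$. Restricting the same computation to $K_1$ and $K_2$ shows that these normal subgroups of $F$ are conjugate by an element of the normalizer $N_{\PGL_2(\kk)}(F)$; a further conjugation of $G$ by $(\delta, e)$ with $\delta \in N_{\PGL_2(\kk)}(F)$ equalizes them to $K_1 = K_2 =: K$. Hence $G_\circ = F \times_D F$ with $D = F/K$, and combining with the extension $1 \to G_\circ \to G \to \Cyc_2 \to 1$ produces the claimed form $G \simeq (F \times_D F)_\bullet \Cyc_2$.

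It remains only to apply Klein's classification (Proposition~\ref{prop: Klein}), which identifies the finite subgroup $F \subset \PGL_2(\kk)$ with one of $\Cyc_n$, $\Dih_n$, $\Alt_4$, $\Sym_4$, or $\Alt_5$.

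The main obstacle I anticipate is the careful bookkeeping of successive conjugations in $\PGL_2(\kk) \times \PGL_2(\kk)$: one must normalize both the pair $(F_1, F_2)$ and the pair $(K_1, K_2)$ without destroying the earlier normalization, which is why the second conjugation has to be chosen inside $N_{\PGL_2(\kk)}(F)$. Once this is set up correctly, one also needs to verify that the new element $\sigma$ still implements the expected swap of the two factors of $F \times_D F$, so that the extension class at the end really is a $\Cyc_2$ acting by swapping; this is a straightforward verification.
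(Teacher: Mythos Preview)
Your proof is correct and follows essentially the same route as the paper: both use that $\Pic(S)^G\simeq\ZZ$ forces $\widehat{G}=\Cyc_2$, then apply Goursat's lemma to $G_\circ$ and use the swap element to identify the two projections $F_1\simeq F_2$, finishing with Klein's classification. Your version is simply more explicit, carrying out the successive conjugations in $\PGL_2(\kk)\times\PGL_2(\kk)$ to arrange $F_1=F_2$ and $K_1=K_2$ literally (note a harmless sign slip: to get $F_2'=F_1$ you should conjugate by $(e,\alpha)$ rather than $(e,\alpha^{-1})$), whereas the paper is content with the abstract isomorphism $G_1\simeq G_2$.
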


\begin{proof}
	Since $G_\circ$ preserves the factors of $\PP^1\times\PP^1$, the condition $\Pic(S)^G\simeq\ZZ$ forces $\widehat{G}=\Cyc_2$. If $G_1$ and $G_2$ are the images of $G_\circ$ under the projections of $\PGL_2(\kk)\times\PGL_2(\kk)$ onto its factors, Goursat's lemma implies that $G_\circ= G_1\times_D G_2$ for some $D$. Since $\widehat{G}\ne\id$, we must have $G_1\simeq G_2$. Combining this with Proposition~\ref{prop: Klein}, we get the result.
\end{proof}

\begin{corollary}\label{cor: minimal groups structure}
	In the notation of Proposition~\ref{prop: dP8 minimal groups}, for every finite subgroup $G\subset\Aut(S)$ satisfying $\Pic(S)^G\simeq\ZZ$, one has the  short exact sequence
	\begin{equation}\label{eq: exact sequence Goursat}
	1\lra K\times K\lra G_\circ\lra D\lra 1, 
	\end{equation}
	where $K$ is a normal subgroup of\, $F$ and $F/K\simeq D$.
\end{corollary}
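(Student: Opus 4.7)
The plan is to deduce the statement as a direct consequence of Corollary \ref{cor: Goursat} applied to the fibred-product decomposition of $G_\circ$ from Proposition \ref{prop: dP8 minimal groups}, with a small symmetry argument needed to identify the two kernels inside the common group $F$.

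First, recall that the proof of Proposition \ref{prop: dP8 minimal groups} already establishes $\widehat{G}=\Cyc_2$ (forced by $\Pic(S)^G\simeq\ZZ$) and $G_\circ=G_1\times_D G_2$ with $G_1,G_2\subset\PGL_2(\kk)$ both isomorphic to $F$. Plugging this fibred-product presentation into Corollary \ref{cor: Goursat} immediately produces an exact sequence
\[
1\to K_A\times K_B\to G_\circ\to D\to 1,
\]
where $K_A$ and $K_B$ are normal subgroups of $G_1$ and $G_2$ respectively, with $G_1/K_A\simeq G_2/K_B\simeq D$.

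Next, I would use the nontriviality of $\widehat G$ to identify $K_A$ and $K_B$ as the same subgroup of $F$. Pick any $g\in G$ mapping to the nontrivial element of $\widehat G$; writing $g=(h_1,h_2)\sigma$, where $\sigma$ is the involution swapping the two $\PGL_2(\kk)$ factors, a direct computation in the semidirect product gives
\[
g(a,b)g^{-1}=(h_1 b h_1^{-1},\,h_2 a h_2^{-1}).
\]
In particular, conjugation by $g$ sends $(a,\id)\in K_A\times\{\id\}$ to $(\id,h_2 a h_2^{-1})\in\{\id\}\times K_B$, so the map $a\mapsto h_2 a h_2^{-1}$ defines an isomorphism $G_1\iso G_2$ carrying $K_A$ bijectively onto $K_B$. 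Fixing any identification $\phi_1\colon G_1\iso F$ and setting $\phi_2(b):=\phi_1(h_2^{-1}bh_2)$ as the identification $G_2\iso F$, the images $\phi_1(K_A)$ and $\phi_2(K_B)$ coincide with a single normal subgroup $K\subseteq F$ satisfying $F/K\simeq D$, yielding the sequence in the claimed form.

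The only real subtlety---hardly a genuine obstacle---is this last bookkeeping step, where one must verify that the two \emph{a priori} distinct kernels produced by Goursat's lemma can indeed be identified inside $F$. The $\Cyc_2$ factor of $\Aut(S)$, which is present in $G$ precisely because $\Pic(S)^G\simeq\ZZ$, does all the work. Apart from this, the proof is a formal consequence of Corollary \ref{cor: Goursat}.
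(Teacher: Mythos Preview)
Your proof is correct and follows essentially the same approach as the paper: apply Corollary~\ref{cor: Goursat} to the fibred-product description of $G_\circ$, then use the $\Cyc_2$-swap (present in $G$ because $\widehat G\ne\id$) to identify the two kernels. The paper states this last step in one line (``the action of $\Cyc_2$ \ldots induces an isomorphism of the kernels $K_A\iso K_B$''), whereas you spell out the conjugation computation explicitly; the content is the same.
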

\begin{proof}
	We apply Corollary~\ref{cor: Goursat} and note that the action of $\Cyc_2$ on the semidirect product $(\PGL_2(\kk)\times\PGL_2(\kk))\rtimes\Cyc_2$ induces an isomorphism of the kernels $K_A\iso K_B$. (In fact, for finite subgroups $A,K_1,K_2$ of $\PGL_2(\kk)$, an isomorphism $A/K_1\simeq A/K_2$ always implies $K_1\simeq K_2$ unless $A=\Dih_{2n}$; see Lemma~\ref{lem: dihedral group subgroups}.)
\end{proof}

\begin{lemma}\label{lemma: structure of Gcirc}
	Let $G\subset\Aut(\PP^1\times\PP^1)$ be a finite subgroup. Then, the following holds:
	\begin{enumerate}
		\item\label{structure 1} Assume there is a Sarkisov $G$-link $\chi\colon S\dashrightarrow T$ of type I as in \eqref{eq: type I link}. Then $G_\circ$ has a subgroup of the form $\Cyc_n\times\Cyc_m$ which is of index at most $2$ in $G_\circ$. 
		\item\label{structure 2} Assume there exists a Sarkisov $G$-link $S\dashrightarrow\PP^2$ of type II. Then $G_\circ$ is isomorphic to a direct product of at most two cyclic groups. In particular, the abelian group $G_\circ$ is generated by at most two elements.
	\end{enumerate}
\end{lemma}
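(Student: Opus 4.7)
The plan is to reduce both statements to the elementary observation that any finite subgroup of $\PGL_2(\kk)$ fixing a point of $\PP^1$ is cyclic. Indeed, such a subgroup sits inside the stabiliser of that point, which is the affine group $\kk\rtimes\kk^*$; in characteristic zero any finite subgroup of this affine group has a second fixed point on $\PP^1$ and can therefore be conjugated into the diagonal torus $\kk^*\subset\PGL_2(\kk)$. Combined with the fact that $G_\circ$ preserves each ruling of $\PP^1\times\PP^1$ and so projects into each factor $\PGL_2(\kk)$ separately, this traps the relevant subgroup inside the two-dimensional torus $(\kk^*)^2$, whose finite subgroups are all of the form $\Cyc_n\times\Cyc_m$.

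I would dispatch part (2) first. By hypothesis the link blows up a $G$-fixed point $p=(a,b)\in S$. Since $G_\circ$ preserves each ruling, its two projections to $\PGL_2(\kk)$ fix $a$ and $b$ respectively and are therefore cyclic by the observation above. Hence $G_\circ$ embeds into the diagonal torus $(\kk^*)^2$, which forces $G_\circ\simeq\Cyc_n\times\Cyc_m$ for some integers $n,m$; in particular it is generated by at most two elements.

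For part (1) the Sarkisov link blows up a reduced $G$-invariant subscheme of length $2$, which in the geometric setting amounts to a $G$-orbit $\{p_1,p_2\}$ of size two. I set $H:=\stab_G(p_1)=\stab_G(p_2)$, an index-$2$ subgroup of $G$, and $H_\circ:=H\cap G_\circ$. Using $[G:G_\circ]=2$ from Proposition \ref{prop: dP8 minimal groups} together with $[G:H]=2$ and the normality of $G_\circ$ in $G$, a brief index count shows that either $H=G_\circ$ (so $H_\circ=G_\circ$) or $HG_\circ=G$ (so $[G_\circ:H_\circ]=2$). In either case $H_\circ$ fixes both $p_i$ pointwise, and applying the argument from part (2) to $H_\circ$ in place of $G_\circ$ produces $H_\circ\simeq\Cyc_n\times\Cyc_m$ of index at most $2$ in $G_\circ$, which is the claim.

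The main point requiring care is the interpretation of the ``degree $2$ point'' in part (1): over an algebraically closed field the blown-up scheme has length $2$ and could \emph{a priori} consist of two disjoint $G$-fixed points, but in the Sarkisov classification of \cite[Propositions 7.12, 7.13]{DolgachevIskovskikh} a type I link centred at a length-$2$ subscheme corresponds to a single $G$-orbit of size two (otherwise the link would factor through two independent size-$1$ blow-ups, and the resulting $T$ would not have Picard rank one over the base of the conic bundle). Once this bookkeeping is settled, the heart of the proof is the short index computation comparing $H$, $G_\circ$ and $H_\circ$; everything else is a direct application of the cyclicity observation above.
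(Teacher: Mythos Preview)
Your argument is correct and follows essentially the same route as the paper: pass to the point-stabiliser, intersect with $G_\circ$, and use that a group fixing a point of $\PP^1\times\PP^1$ and preserving both rulings is a product of two cyclic groups. The only cosmetic difference is that the paper phrases the last step via the faithful reducible representation $G''\hookrightarrow\GL(T_pS)$, whereas you project to each $\PGL_2(\kk)$ factor separately; these are the same observation. One small simplification: you do not need to invoke $[G:G_\circ]=2$ from Proposition~\ref{prop: dP8 minimal groups}, since $[G_\circ:H_\circ]\leqslant[G:H]=2$ follows directly from the second isomorphism theorem regardless of the index of $G_\circ$ in $G$.
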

\begin{proof}
	\ref{structure 1} Recall that the centre of $\chi$ is a $G$-point $\{p,q\}$ of degree 2. Therefore, the stabilizer $G'\subset G$ of $p$ is a normal subgroup of index 2 and $G''=G_\circ\cap G'$ is of index at most 2 in $G_\circ$. Since $G''$ acts on $S$ fibrewise, the faithful representation $G''\to\GL(T_pS)$ is reducible; \textit{i.e.}~it is a direct sum of two 1-dimensional representations. Hence $G''\simeq\Cyc_n\times\Cyc_m$. In  case~\ref{structure 2}, the group $G$ fixes a point on $S$, and the result follows similarly.
\end{proof}

We now proceed with determining for which groups $G\simeq (F\times_D F)_\bullet\Cyc_2$ of Proposition~\ref{prop: dP8 minimal groups} the surface $S=\PP^1\times\PP^1$ is actually $G$-birationally rigid. In our analysis, we will often use Corollary~\ref{cor: minimal groups structure} (including its notation) without explicitly mentioning it.

\subsection{Case $\boldsymbol{F=\Cyc_n}$}\label{subsec: dP8 Cyc} Set $S=L_1\times L_2$, where $L_1\simeq L_2\simeq\PP^1$. Then $G_\circ\subseteq H_1\times H_2$ and $H_1\simeq H_2\simeq\Cyc_n$ are cyclic groups of order $n$, and $H_i$ faithfully acts on $L_i$. Recall that each $H_i$ fixes exactly two points on $L_i$. Hence, the group $G_\circ$ fixes four points on $S$. Let $p\in S$ be one of these fixed points. The group $G$ is a disjoint union of $G_\circ$ and $g G_\circ$, where $g\in G\setminus G_\circ$. Consider the set $\Omega=\{p,g(p)\}$. Since $g^2\in G_\circ$, this set is $g$-invariant. Furthermore, since $g^{-1}G_\circ g=G_\circ$, the point $g(p)$ is fixed by $G_\circ$. Therefore, $\Omega$ is $G$-invariant. If $|\Omega|=1$ (\textit{i.e.}~$g(p)=p$), then the stereographic projection from $p$ conjugates $G$ to a group acting on $\PP^2$, so $S$ is not $G$-birationally rigid. Assume that $g(p)\ne p$. Then $g(p)$ and $p$ are in general position on $S$. Indeed, the coordinates $[x:y]$ on $L_1$ and $[z:t]$ on $L_2$ can be chosen so that the fixed points of $G_\circ$ are
\[
([1:0],[1:0]),\ ([1:0],[0:1]),\ ([0:1],[1:0]),\ ([0:1],[0:1]).
\]
The automorphism $g$ is of the form $([x:y],[z:t])\mapsto (A[z:t],B[x:y])$, where $A,B\in\PGL_2(\kk)$. Assume that $g$ sends $([1:0],[1:0])$ onto $([1:0],[0:1])$, \textit{i.e.}~$A[1:0]=[1:0]$ and $B[1:0]=[0:1]$. Then, as discussed above, $g^2\colon ([x:y],[z:t])\mapsto (AB[x:y],BA [z:t] )$ fixes $([1:0],[1:0])$, which is impossible as $BA[1:0]=B[1:0]=[0:1]$. Similarly, $g$ cannot send $([1:0],[1:0])$ onto $([0:1],[1:0])$. We conclude that $p$ and $g(p)$ are not in a common fibre of either projection. Their blow-up gives a $G$-conic bundle $S'\to\PP^1$ with two singular fibres; hence $S$ is not even $G$-solid. 

\begin{remark}\label{rem: cyclic group quadric}
	As we already noticed in Remark~\ref{rem: fixed point cyclic}, a cyclic group always has a fixed point on a quadric. Blowing this point up and contracting the strict transforms of the lines passing through it, we arrive at $\PP^2$ (one often says that the group is \emph{linearizable} in this case).
\end{remark}

\subsection{Case $\boldsymbol{F=\Dih_n}$}\label{subsec: dP8 Dih} Recall that $\Dih_2\simeq\Klein_4\simeq\Cyc_2^2$, and set $\Dih_1=\Cyc_2$. Assume $n\geqslant 3$. Recall from Lemma~\ref{lem: dihedral group subgroups} that proper normal subgroups of $\Dih_n$ are cyclic groups of order $n/d$ for each $d$ dividing $n$ (of index $2d$) and, if $n$ is even, dihedral of index 2. Therefore, we have the following two possibilities:

\subsubsection{}\label{sec: series 2} The group $G_\circ$ fits into the short exact sequence
\begin{equation}\label{eq: dihedral 2}
	1\lra \Dih_m\times\Dih_m\lra G_\circ\lra D\lra 1,
\end{equation}
where either $D\simeq\Cyc_2$ and $n=2m$, $m\geqslant 2$, or $D=\id$ and $m=n\geqslant 3$.

\subsubsection{}\label{sec: series 1} The group $G_\circ$ fits into the short exact sequence
\begin{equation}\label{eq: dihedral 1}
	1\lra \Cyc_m\times\Cyc_m\lra G_\circ\lra\Dih_d\lra 1,
\end{equation}
where $n=md$, $m\geqslant 1$. Note that this includes the extremal case $m=1$, $G_\circ=\Dih_n$, $n\geqslant 3$.

\begin{proposition}\label{prop: dihedral groups on quadric}
	In the notation from above, one has the following:
	\begin{enumerate}
		\item\label{Dih 1} $S$ is $G$-birationally rigid in the case~\ref{sec: series 2}.
		\item\label{Dih 2} $S$ may fail to be $G$-birationally rigid in the case~\ref{sec: series 1}. If $\chi\colon S\dashrightarrow S'$ is a Sarkisov $G$-link to a different $G$-Mori fibre space, then either $S'=\PP^2$, or $S'$ is a $G$-del Pezzo surface of degree~6 and $G\simeq\Dih_6$, or $S'$ is a $G$-del Pezzo surface of degree 5 and $G\simeq\GA_1(\FF_5)$, or $S'$ admits the structure of a $G$-conic bundle with two singular fibres. 
	\end{enumerate}
\end{proposition}
\begin{proof}
	\ref{Dih 1} First, there are no Sarkisov $G$-links of type~I from $S$ or links to $\PP^2$. Indeed, otherwise, $G_\circ$ has a subgroup $N_\circ\simeq\Cyc_k\times\Cyc_l$ of index at most two by Lemma~\ref{lemma: structure of Gcirc}. But then $N_\circ\cap(\Dih_m\times\Dih_m)$ must be an abelian group which can be generated by at most two elements and has index at most~2 in $\Dih_m\times\Dih_m$, which is not possible. Furthermore, $G_\circ$ obviously does not embed into the groups from (\ref{eq: dP8 groups}). We are also able to show that there are no Sarkisov $G$-links of type II centred at a point of degree 4. Assume that it exists and is given by the diagram (\ref{eq: Sarkisov II}). Then $T$ is a del Pezzo surface of degree 4 in $\PP^4$. Note that $|G|=4n^2$ if $D\simeq\Cyc_2$, and $|G|=8n^2$ if $D=\id$. From Remark~\ref{rem: dP4 auto} we see that $G$ does not embed into $\Aut(T)$ unless $m=2$; in particular, $G$ must contain a subgroup of the form $\Delta=\Cyc_2^4$. It is easy to prove, see \cite[Lemma 3.1 and Proposition 3.11]{BeauvilleElementary}, that such a $\Delta$ is conjugate in $\PGL_5(\kk)$ to a subgroup of the diagonal torus; it acts on $T$ (which is an intersection of two quadrics in $\PP^4$) by changing the signs of the ambient coordinates $x_k$ and consists of the projective transformations $\id$, $\jmath_i$ for $i=1,\ldots,5$, $\jmath_i\circ\jmath_j$ for $1\leqslant i<j\leqslant 5$, where $\jmath_k\colon x_k\mapsto -x_k$. The fixed-point locus of each $\jmath_k$ is an elliptic curve cut out on $T$ by the hyperplane $\{x_k = 0\}$, while the fixed-point loci of other non-trivial involutions in $\Delta$ consist of exactly four points. If $\tr(\delta)$ denotes the trace of the action of $\delta$ on $\Pic(T)\otimes\CC$ and $\Eu(\cdot)$ denotes the topological Euler characteristic, then $\Eu(T^\delta)=\tr(\delta)+2$ by the Lefschetz fixed-point formula (here $T^\delta$ denotes the fixed locus of $\delta$). Since $\Eu(T^{\jmath_k})=0$ and $\Eu(T^{\jmath_k\circ\jmath_l})=4$, we get $\tr(\jmath_k)=-2$ and $\tr(\jmath_k\circ\jmath_l)=2$. Therefore, $\rank\Pic(T)^\Delta=\frac{1}{|\Delta|}\sum_{\delta\in\Delta}\tr(\delta)=1$, which contradicts the existence of a Sarkisov link; see \cite[Section~6.1]{DolgachevIskovskikh} or \cite[Section~2.1]{YasinskyDelPezzo} for more details. We conclude that $S$ is $G$-birationally rigid.
	
	\ref{Dih 2} All possible Sarkisov $G$-links from $S$ were described at the beginning of the section; as we know, those centred at points of degrees 7, 6 and 4 lead to  $G$-isomorphic del Pezzo surfaces. Suppose that $S'$ is a del Pezzo surface of degree 6. As $\chi^{-1}$ starts with blowing up a $G$-fixed point on $S'$, the group $G$ must be isomorphic to $\Cyc_6$, $\Sym_3$ or $\Dih_6$, and hence $G_\circ$ is $\Cyc_3$, $\Cyc_6$ or $\Sym_3$. As we are in the setting of the exact sequence (\ref{eq: dihedral 1}), we must have $G\simeq\Dih_6$. Next, if $S'$ is a del Pezzo surface of degree 5, then Lemma~\ref{lemma: dP5 minimal groups} and the exact sequence (\ref{eq: dihedral 1}) show that we must have $m=1$, $G_\circ\simeq\Dih_5$ and hence $G\simeq\GA_1(\FF_5)$. 
\end{proof}

In fact, all possibilities described in Proposition~\ref{prop: dihedral groups on quadric}\ref{Dih 2} do occur. For $\Dih_6$-links to the del Pezzo surface of degree 6, this was already mentioned at the end of Section~\ref{sec: dP6}; see also \cite{IskovskikhNonConjugate}. To construct $\GA_1(\FF_5)$-links to the del Pezzo surface of degree 5, recall that $\GA_1(\FF_5)$ has presentation $\langle \alpha,\beta\ |\ \alpha^5=\beta^4=\id,\ \beta\alpha\beta^{-1}=\alpha^3\rangle$. Let $\alpha\in\Aut(S')\simeq\Sym_5$ be an automorphism of order 5. It is easy to show, see \cite[Lemma 4.16]{YasinskyOdd}, using the Lefschetz fixed-point formula, that $\alpha$ has exactly two fixed points, say $p$ and $q$, in general position on~$S'$. Then $\alpha\beta\cdot p=\alpha^{-2}\beta\alpha\cdot p=\alpha^{-2}\beta\cdot p$, so $\beta\cdot p$ is a fixed point of $\alpha^3$ and hence of $\alpha$; \textit{i.e.}~$\beta\cdot p\in\{p,q\}$. The set $\{p,q\}$ is an orbit of $G$ in general position, and one can associate a link to it; see \cite[Theorem 1.1 and Lemma 4.2]{WolterEquivariantBirationalDP5} for more details.  

Next, let us provide an example of a link to a conic bundle and to $\PP^2$.

\begin{example}\label{ex: dP8 non-rigid}	
	Assume that $m=1$ and that $G_\circ=\Dih_n$ acts on $S=\PP^1\times\PP^1$ ``diagonally'', \textit{i.e.}~by $([x:y],$ $[z:t])\mapsto (A[x:y],A[z:t] )$, where $A\in\PGL_2(\kk)$ are elements of $\Dih_n$. Let us choose the coordinates so that the action of $\Dih_n=\langle r,s\colon r^n=s^2=\id,srs=r^{-1}\rangle$ on each factor is given by $r\colon [x:y]\mapsto [x:\omega y]$, $s\colon [x:y]\mapsto [y:x]$, where $\omega$ denotes a primitive $\supth{n}$ root of unity. Consider two automorphisms of $S$
	\begin{gather*}
		\tau_1\colon ([x:y],[z:t])\longmapsto ([z:t],[x:y]),\\
		\tau_2\colon ([x:y],[z:t])\longmapsto ([t:z],[y:x]).
	\end{gather*}
	Then $\tau_1$ commutes with $G_\circ$, while $\tau_2$ defines a semidirect product $G_\circ\rtimes\langle\tau_2\rangle$, where $\tau_2$ acts by the inversion of $r$ and preserves $s$. Note that $r$ has exactly two fixed points $[1:0]$ and $[0:1]$, which are permuted by $s$. Let
	\[
	p_1=([1:0],[1:0]),\quad p_2=([1:0],[0:1]),\quad p_3=([0:1],[1:0]),\quad p_4=([0:1],[0:1]).
	\]
	The sets $\Omega_1=\{p_1,p_4\}$ and $\Omega_2=\{p_2,p_3\}$ are invariant with respect to $G_\circ$, $\tau_1$ and $\tau_2$, and provide the orbits (in general position) for  $G_1=G_\circ\times\langle\tau_1\rangle$ and $G_2=G_\circ\rtimes\langle\tau_2\rangle$. Blowing up these orbits gives a $G$-conic bundle $S'\to\PP^1$ with two singular fibres (in fact, this is a del Pezzo surface of degree~6; see \cite[Theorem 5]{Iskovskikh_minimal}).  	
	
	Similarly, one can construct examples of links to $\PP^2$. Note that if such a  link exists, Lemma~\ref{lemma: fixed point} implies that $G_\circ$ is an abelian group generated by at most two elements. In particular, $d\leqslant 2$ in (\ref{eq: dihedral 1}). If $d=1$, then $|G_\circ|=2m^2$ and hence $G_\circ$ is an index 2 subgroup of $\Dih_m\times\Dih_m$, which is impossible. If $d=2$, then $G_\circ$ is an index 4 subgroup therein and hence coincides with $\Cyc_{2m}\times\Cyc_{2m}\subset\Dih_{2m}\times\Dih_{2m}$. Making this group act on $\PP^1\times\PP^1$ diagonally, as above, and taking a direct product with $\tau_1$, we get a linearizable action.
\end{example}

Finally, if $n=2$, then $G_\circ=\Klein_4\times_D\Klein_4\subset\Klein_4\times\Klein_4\simeq\Cyc_2^4$ and hence one has the following possibilities for $G_\circ$:
\[
\Klein_4\times\Klein_4\ (D=\id),\quad(\Cyc_2\times\Cyc_2)\times\Cyc_2\simeq\Cyc_2^3\ (D=\Cyc_2),\quad \Klein_4\ (D=\Klein_4).
\]
When $G_\circ\simeq\Klein_4\times\Klein_4\simeq\Cyc_2^4$, the same arguments as in the proof of Proposition~\ref{prop: dihedral groups on quadric}\ref{Dih 1} show that $S$ is $G$-birationally rigid. In the remaining two cases $D\simeq\Cyc_2$ and $D\simeq\Klein_4$, one can construct examples similar to Example~\ref{ex: dP8 non-rigid}. 

\subsection{Case $\boldsymbol{F=\Alt_4}$}\label{subsec: dP8 A4} If $K=\Alt_4$, then $G_\circ=\Alt_4\times\Alt_4$. If $K=\Klein_4$, then we have a short exact sequence
\begin{equation}\label{eq: A4}
1\lra \Klein_4\times\Klein_4\lra G_\circ\lra \Cyc_3\lra 1,
\end{equation}
while for $K=\id$ we simply get $G_\circ=\Alt_4$. Therefore, $G$ is of the form
\begin{equation}\label{eq: A4 extensions}
	(\Alt_4\times\Alt_4)_\bullet\Cyc_2,\quad  ((\Klein_4\times\Klein_4)_\bullet\Cyc_3)_\bullet\Cyc_2\quad \text{or}\quad {\Alt_4}_\bullet\Cyc_2.
\end{equation}
Note that none of the groups $G_\circ$ admits a subgroup $\Cyc_n\times\Cyc_m$ of index at most 2; hence there are no Sarkisov $G$-links of type I on $S$ and no $G$-links leading to $\PP^2$ by~Lemma~\ref{lemma: structure of Gcirc}. Clearly, none of the extensions (\ref{eq: A4 extensions}) is isomorphic to a group from the list~(\ref{eq: dP8 groups}). Since birational Geiser and Bertini involutions and links centred at points of degree 4 lead to a $G$-isomorphic surface, we get that $S$ is $G$-birationally rigid.

Although this is not necessary for our further purposes, let us proceed to explore the existence of $G$-links of type II at points of degree 4. If $G\simeq(\Alt_4\times\Alt_4)_\bullet\Cyc_2$, then there are no such $G$-links as $|G|$ is divisible by 9 and hence $G$ does not embed into automorphism groups of del Pezzo surfaces of degree 4. If such a $G$-link existed for $G\simeq ((\Cyc_2^4)_\bullet\Cyc_3)_\bullet\Cyc_2$, then $G$ would embed into $\Aut(T)$, where $T$ is a quartic del Pezzo surface, and moreover $G=\Aut(T)\simeq\Cyc_2^4\rtimes\Sym_3$. However, one has $\Pic(T)^G\simeq\ZZ$ in this case \cite[Theorem 6.9]{DolgachevIskovskikh}, which gives a contradiction. 

Finally, if $K=\id$, the group $G_\circ=\Alt_4\times_{\Alt_4}\Alt_4\simeq\Alt_4$ acts on $S$ by 
\[
([x:y],[z:t])\longmapsto (g[x:y],\varphi(g)[z:t]),\quad g\in\Alt_4,
\] 
where $\varphi\in\Aut(\Alt_4)$ is a fixed automorphism. The extension ${\Alt_4}_\bullet\Cyc_2$ always splits, and one has $G\simeq\Alt_4\rtimes_\psi\langle\tau\rangle$, where $\tau\in G\setminus G_\circ$. The latter automorphism is of the form
\[
\tau\colon ([x:y],[z:t])\longmapsto (A[z:t],B[x:y]),
\]
where $A,B\in\PGL_2(\kk)$. Since $\tau^2=\id$, we find that $B=A^{-1}$. Let us choose the coordinates on the first factor of $\PP^1\times\PP^1$ so that the derived subgroup $\Klein_4$ of $\Alt_4$ is generated by $[x:y]\mapsto [x:-y]$, $[x:y]\mapsto [y:x]$. A direct computation then shows that an element of order 3 is represented by one of the following matrices: 
\begin{equation}\label{eq: matrices order 3}
\begin{pmatrix}
	i & i\\
	1 & -1
\end{pmatrix},\quad 
\begin{pmatrix}
	i & -i\\
	1 & 1
\end{pmatrix},\quad 
\begin{pmatrix}
	-i & i\\
	1 & 1
\end{pmatrix}, \quad 
\begin{pmatrix}
	-i & -i\\
	1 & -1
\end{pmatrix}.
\end{equation}

Suppose that $\psi=\id$, so $G\simeq\Alt_4\times\langle\tau\rangle$. Below we give an example of $G$-actions which do not give  rise to a Sarkisov link. Their systematic study will be provided elsewhere.

\begin{example}
	Assume $\varphi=\id$, so the group $G_\circ$ acts on $\PP^1\times\PP^1$ ``diagonally'' by $([x:y],[z:t])\mapsto (g\cdot[x:y],$ $g\cdot [z:t] )$, where $g\in \PGL_2(\kk)$. Further, assume that $\tau$ is given by  
	\begin{equation}\label{eq: action of tau}
	([x:y],[z:t])\longmapsto ([z:t],[x:y]).
	\end{equation}
	Obviously, it commutes with $G_\circ$. If $\Omega=G\cdot p$ is an orbit of cardinality $4$ on $S$, then the stabilizer of $p$ is a cyclic group $\langle \tau\rangle\times\langle \delta\rangle\simeq\Cyc_6$, where $\delta\in G_\circ$ is an element of order 3. As the fixed locus of $\tau$ is the diagonal $\Delta\subset\PP^1\times\PP^1$, we have $p\in\Delta$. But $G_\circ$ preserves $\Delta$; hence we have $\Omega\subset\Delta$. So, the orbit $\Omega$ is not in general position on $S$, and hence there is no Sarkisov $G$-link starting from $\Omega$. 

	Now choose a non-identity automorphism $\varphi$; \textit{e.g.} assume that $\varphi\in\Aut(\Alt_4)$ permutes the elements of $\Klein_4\subset\Alt_4$ such that $\Klein_4=\langle\alpha\rangle\times\langle\beta\rangle$ acts by
	\[
	\alpha\colon([x:y],[z:t])\longmapsto ([x:-y],[t:z]),\quad \beta\colon([x:y],[z:t])\longmapsto ([y:x],[t:-z]). 
	\]
	Set $M=\begin{psmallmatrix}
		i & i\\
		1 & -1
	\end{psmallmatrix}$, and define an element of order 3 of $\Alt_4$ and $\tau$ as
	\[
	\gamma\colon\left( 
	\begin{bmatrix}
		x\\y
	\end{bmatrix}, 
	\begin{bmatrix}
	z\\t
	\end{bmatrix} 
	\right)\longmapsto 
	\left( 
	M\begin{bmatrix}
		x\\y
	\end{bmatrix}, 
	M\begin{bmatrix}
		z\\t
	\end{bmatrix} 
	\right),\quad 
	\tau\colon\left( 
	\begin{bmatrix}
		x\\y
	\end{bmatrix}, 
	\begin{bmatrix}
		z\\t
	\end{bmatrix} 
	\right)\longmapsto 
	\left( 
	M\begin{bmatrix}
		z\\t
	\end{bmatrix}, 
	M^{-1}\begin{bmatrix}
		x\\y
	\end{bmatrix} 
	\right),
	\]
	respectively. One easily checks that $\alpha,\beta,\gamma$ generate the group $\langle\alpha,\beta,\gamma\colon\alpha^2=\beta^2=\gamma^3=\id,\gamma\alpha\gamma^{-1}=\alpha\beta=\beta\alpha,\gamma\beta\gamma^{-1}=\alpha\rangle\simeq\Alt_4$ and $\tau$ commutes with this group. However, the automorphism \[\left( 
	\begin{bmatrix}
		x\\y
	\end{bmatrix}, 
	\begin{bmatrix}
		z\\t
	\end{bmatrix} 
	\right)\longmapsto 
	\left( 
	\begin{bmatrix}
		x\\y
	\end{bmatrix}, 
	M\begin{bmatrix}
		z\\t
	\end{bmatrix} 
	\right)\] of $S$ conjugates the group $\Alt_4\times\langle\tau\rangle$ to the one with ``diagonal'' action of $\Alt_4$ and $\tau$ acting as in (\ref{eq: action of tau}).
\end{example}

\subsection{Case $\boldsymbol{F=\Sym_4}$}\label{subsec: dP8 S4} 
If $K=\Sym_4$, then $G_\circ=\Sym_4\times\Sym_4$. If $K=\id$, then $G_\circ=\Sym_4$. In the remaining two cases $K=\Alt_4$ and $K=\Klein_4$, the group $G_\circ$ fits into the short exact sequence 
\begin{equation}\label{eq: S4 one}
1\lra \Alt_4\times\Alt_4\lra G_\circ\lra \Cyc_2\lra 1
\end{equation}
or into the short exact sequence
\begin{equation}\label{eq: S4 two}
1\lra \Klein_4\times\Klein_4\lra G_\circ\lra \Sym_3\lra 1. 
\end{equation}
By Lemma~\ref{lemma: structure of Gcirc}, there are no Sarkisov $G$-links to $\PP^2$ and no Sarkisov $G$-links of type I starting from $S$. Furthermore, none of the extensions
\begin{equation}\label{eq: S4 extensions}
	(\Sym_4\times\Sym_4)_\bullet\Cyc_2,\quad ((\Alt_4\times\Alt_4)_\bullet\Cyc_2)_\bullet\Cyc_2,\quad ((\Klein_4\times\Klein_4)_\bullet\Sym_3)_\bullet\Cyc_2,\quad {\Sym_4}_\bullet\Cyc_2
\end{equation}
of $\Cyc_2$ by $G_\circ$ is isomorphic to a group from the list (\ref{eq: dP8 groups}). Note that there are no Sarkisov $G$-links of type II starting at a point of degree 4. Indeed, otherwise, $G$ has a subgroup of index 4 which fixes a point on $S$, and hence $G_\circ$ has an abelian subgroup of the form $\Cyc_n\times\Cyc_m$ and of index $2$ or $4$, which is clearly not the case (alternatively, one can again argue using Remark~\ref{rem: dP4 auto}). We conclude that $S$ is $G$-birationally rigid.

\subsection{Case $\boldsymbol{F=\Alt_5}$}\label{subsec: dP8 A5} Since $\Alt_5$ is simple, one has $K=\id$ or $K=\Alt_5$, so either $G\simeq(\Alt_5\times_{\Alt_5}\Alt_5)_\bullet\Cyc_2\simeq{\Alt_5}_\bullet\Cyc_2$ or $G\simeq(\Alt_5\times\Alt_5)_\bullet\Cyc_2$. Clearly, none of these groups is isomorphic to a group from (\ref{eq: dP8 groups}), unless in the first extension we get $G\simeq\Sym_5$. However, by Lemma~\ref{lemma: fixed point}, there exists\footnote{Alternatively, if such link existed, then the surface $T$ in the diagram (\ref{eq: Sarkisov II}) is a cubic surface with an action of $\Sym_5$. It is well known that $T$ must be the Clebsch diagonal cubic. One always has $\Pic(T)^{\Sym_5}\simeq\ZZ$; see \cite[Theorem 6.14]{DolgachevIskovskikh}.} no Sarkisov $\Sym_5$-link $\chi\colon S\dashrightarrow S'$ to a del Pezzo surface $S'$ of degree~5, as the centre of $(\eta')^{-1}$ in (\ref{eq: Sarkisov II}) would be a point of degree 2. Further, none of the groups $G_\circ=\Alt_5$ or $G_\circ=\Alt_5\times\Alt_5$ has a subgroup of the form $\Cyc_n\times\Cyc_m$ of index at most 2 in $G_\circ$, so there are no $G$-links of type~I and no $G$-links to $\PP^2$ by Lemma~\ref{lemma: structure of Gcirc}. By Remark~\ref{rem: dP4 auto}, $S$ does not admit $G$-links centred at a point of degree~$4$. Similarly, $G$ does not embed into automorphism group of del Pezzo surfaces of degree 1 or 2; see \cite[Sections~8.7 and~8.8]{DolgachevClassicalAG}. We conclude that $S$ is $G$-birationally superrigid.

\begin{remark}\label{rem: proof bis} The results of previous sections allow us to complete the proof of the main theorem in an alternative way. Indeed, $S\simeq\PP^1\times\PP^1$ may fail to be $G$-birationally rigid exactly in the following cases:
\begin{a-enumerate}
	\item\label{r:pb-a} $G=(\Cyc_n\times_D\Cyc_n)_\bullet\Cyc_2$ is a group from Section~\ref{subsec: dP8 Cyc} and $G_\circ=\Cyc_n\times_D\Cyc_n$. 
	\item\label{r:pb-b}  $G=(\Dih_n\times_D\Dih_n)_\bullet\Cyc_2$ is a group from Section~\ref{subsec: dP8 Dih}, where $G_\circ$ is as in the case~\ref{sec: series 1}.
	\item\label{r:pb-c}  $G=(\Cyc_2^3)_\bullet\Cyc_2$ or $G={\Klein_4}_\bullet\Cyc_2$ are groups from Section~\ref{subsec: dP8 Dih}.
\end{a-enumerate}

The groups $G_\circ$ from~\ref{r:pb-a}  are abelian and generated by at most two elements, hence cannot contain any of the groups $G_\circ$ of Sections~\ref{subsec: dP8 Dih}--\ref{subsec: dP8 A5} unless $G_\circ=\Klein_4$ is as in~\ref{r:pb-c}. However, in the latter case, $S$ is not $G$-birationally rigid. The groups $G_\circ$ from Sections~\ref{subsec: dP8 Dih}--\ref{subsec: dP8 A5} cannot embed into the groups $G_\circ$ from~\ref{r:pb-c} by  order reasons. 

Suppose that $G_\circ$ is a group from~\ref{r:pb-b}. Then there exists a Sarkisov $G$-link $\chi\colon S\dashrightarrow S'$ leading to a different $G$-Mori fibre space $S'\to Z$, where $Z=\pt$ or $Z=\PP^1$; all possibilities are given by Proposition~\ref{prop: dihedral groups on quadric}. If $S'=\PP^2$, then $G$ fixes a point on $S$, and hence the same is true for any subgroup $H\subset G$, showing that $H$ is linearizable as well (see Remark~\ref{rem: cyclic group quadric}). If $S'\to\PP^1$ is a $G$-conic bundle, then $G$ has an orbit of size 2 on $S$, and hence $H$ has an orbit of size 1 or 2; the result follows. If $S'$ is a $G$-del Pezzo surface of degree 6, then $G\simeq \Dih_6$ and $G_\circ\simeq\Dih_3$. Then $H_\circ\in\{\id,\Cyc_2,\Cyc_3\}$ and $S$ is not $H$-birationally rigid. Finally, if $S'$ is a del Pezzo surface of degree 5, then $G\simeq\GA_1(\FF_5)$ and $G_\circ\simeq\Dih_5$; hence, $H_\circ\in\{\id,\Cyc_2,\Cyc_5\}$ and once again $S$ fails to be $H$-birationally rigid.

\end{remark}

\section{The projective space of dimension 3 and the ``mixed'' arithmetico-geometric case}

\subsection{$\boldsymbol{G}$-birational rigidity of $\boldsymbol{\PP^3}$}\label{sec: projective space} Inspired by Blichfeldt's classification \cite{Blichfeldt} of finite subgroups of $\PGL_4(\CC)$, I.~Cheltsov and C.~Shramov managed to describe all the cases in which $\PP^3_\CC$ is $G$-birationally rigid. 

\begin{theorem}[\textit{cf.} {\cite[Theorem 1.1]{CheltsovShramovFiniteCollineationGroups}}]
	The 3-dimensional complex projective space $\PP^3$ is $G$-birationally rigid if and only if $G$ is primitive and not isomorphic to $\Alt_5$ and $\Sym_5$.
\end{theorem}

This immediately implies a positive answer to Cheltsov--Koll\'{a}r's question for $X=\PP^3$. 

\begin{corollary}
	Let $G$ be a finite group and $H\subset G$ be its subgroup. If\, $\PP^3$ is $H$-birationally rigid, then $\PP^3$ is $G$-birationally rigid.
\end{corollary}
\begin{proof}
	Recall (see Definition~\ref{def: Blichfeldt}) that finite subgroups of $\Aut(\PP^3)\simeq\PGL_4(\CC)$ are either transitive (\textit{i.e.}~do not fix any point and do not leave any line invariant) or intransitive. Transitive groups are either imprimitive (\textit{i.e.}~leave a union of two skew lines invariant or have an orbit of length 4) or primitive.
	
	Now, if $\PP^3$ is $H$-birationally rigid, then $H$ is transitive, and hence $G$ is transitive as well. Clearly, if $G$ leaves a union of two skew lines invariant, then the same is true for $H$. If $G$ has an orbit of length 4 and $H$ has no orbit of length 4, then $H$ either fixes a point, or permutes two points in $\PP^3$ and hence has an invariant line. Both cases are not possible as $H$ is transitive. So, $G$ is primitive. It remains to notice that $G$ is not isomorphic to $\Alt_5$ or $\Sym_5$ as all proper subgroups of $\Sym_5$, not isomorphic to $\Alt_5$, are not primitive; see \textit{e.g.} \cite[Appendix]{CheltsovShramovFiniteCollineationGroups}.
\end{proof}

\subsection{Cheltsov--Koll\'{a}r's question in the arithmetico-geometric case}\label{subsec: mixed}

Assume that the base field $\kk$ is not algebraically closed. A natural generalization of Cheltsov--Koll\'{a}r's question to the ``mixed'' setting of Remark~\ref{rem: mixed case} would be: if $X$ is an $H$-birationally rigid $H$-Fano variety over $\overline{\kk}$, then must $X$ be $G$-birationally rigid over $\kk$? We claim that the answer to this question is \emph{negative} already for $H=G$. 

Let $p\equiv 1$ (mod $3$) be a prime number, fix a non-trivial homomorphism $\Cyc_3\to\Aut(\Cyc_p)$, and consider the corresponding semidirect product $G=\Cyc_p\rtimes\Cyc_3$. By the result of C.~Shramov \cite[Theorem 1.3(ii)]{Shramov2}, there exist a field $\kk$ (of characteristic zero) and a non-trivial Severi--Brauer surface $S$ over $\kk$ such that $G\hookrightarrow\Aut(S)$. Any Sarkisov $G$-link $\chi\colon S\dashrightarrow S'$ centred at a point of degree 3 leads to the {\it opposite} Severi--Brauer surface $S'=S^{\rm op}$; if $S$ corresponds to a central simple $\kk$-algebra $A$, then by definition, $S^{\rm op}$ is the unique Severi--Brauer surface corresponding to $A^{\rm op}$, the inverse of $A$ in the Brauer group $\Br(\kk)$. Since $S^{\rm op}$ is never isomorphic to $S$, we conclude that $S$ is not $G$-birationally rigid. However, passing to the algebraic closure of $\kk$, one has $S_{\overline{\kk}}\simeq\PP_{\overline{\kk}}^2$, which is $G$-birationally rigid by Sakovics' Theorem~\ref{thm: Sakovics}. For more recent results on birational and biregular self-maps of Severi--Brauer surfaces, see \cite{Shramov1,TrepalinSB,BlancSchneiderYasinsky}.

%%%%%%%%%%%%%%%%%%%%%
% References
%%%%%%%%%%%%%%%%%%%%%
\newcommand{\etalchar}[1]{$^{#1}$}
\providecommand{\bysame}{\leavevmode\hbox to3em{\hrulefill}\thinspace}

\end{document}